\providecommand{\U}[1]{\protect\rule{.1in}{.1in}}
\newtheorem*{theorem*}{Theorem}
\newtheorem*{theorem1}{Theorem 1}
\newtheorem*{theorem2}{Theorem 2}
\newtheorem*{theorem3}{Theorem 3}
\newtheorem*{example1}{Example 1}
\newtheorem*{example2}{Example 2}
\newtheorem*{example3}{Example 3}
\newtheorem{corollary}{Corollary}
\newtheorem{definition}{Definition}
\newtheorem{example}{Example}
\newtheorem{lemma}{Lemma}
\newtheorem{remark}{Remark}
\newcommand{\pr}{\operatorname{pr}}
\newcommand{\codim}{\operatorname{codim}}
\newcommand{\id}{\operatorname{id}}
\newcommand{\diffto}{\xrightarrow{\raisebox{-0.2 em}[0pt][0pt]{\smash{\ensuremath{\sim}}}}}
\newcommand{\rmap}{\longrightarrow}
\newcommand{\lmap}{\longleftarrow}
\newcommand{\Co}{N^*}
\newcommand{\N}{N}
\newcommand{\8}{\infty}
\begin{document}
\title{The normal form theorem around Poisson transversals}
\author{Pedro Frejlich}
\address{Departamento de Matem\'{a}tica Puc Rio de Janeiro, Rua Marqu\^{e}s de S\~{a}o Vicente 225, G\'{a}vea, Rio de Janeiro RJ 22451-900, Brazil}
\email{frejlich.math@gmail.com}
\author{Ioan M\u{a}rcu\cb{t}}
\address{IMAPP, Radboud University Nijmegen, 6500 GL, Nijmegen, The Netherlands}
\email{i.marcut@math.ru.nl}
\begin{abstract}
We prove a normal form theorem for Poisson structures around Poisson transversals (also called cosymplectic
submanifolds), which simultaneously generalizes Weinstein's symplectic neighborhood theorem from symplectic geometry \cite{Wein71} and Weinstein's splitting theorem \cite{Wein83}. Our approach turns out to be essentially canonical, and as a byproduct, we obtain an equivariant version of the latter theorem.
\end{abstract}
\dedicatory{Dedicated to Alan Weinstein on the occasion of his $70$th birthday}

\maketitle

\tableofcontents

\section{Introduction}

This paper is devoted to the study of semi-local properties of {\em Poisson transversals}. These are submanifolds $X$ of a Poisson manifold $(M,\pi)$ that meet each symplectic leaf of $\pi$ {\em transversally} and {\em symplectically}. A
Poisson transversal $X$ carries a canonical Poisson structure, whose leaves are the intersections of leaves of $\pi$ with $X$, and are endowed with the pullback symplectic structure.

Even though this class of submanifolds has very rarely been dealt with in full generality -- much to our dismay and
surprise -- Poisson transversals permeate the whole theory of Poisson manifolds, often playing a quite fundamental role. This lack of specific attention is especially intriguing since they are a special case of several distinguished classes of submanifolds which have aroused interest lately: Poisson transversals are Lie-Dirac submanifolds \cite{Xu03},
Poisson-Dirac submanifolds \cite{CrFer04}, and also Pre-Poisson submanifolds \cite{CattZamb09} (see also
\cite{Zamb11} for a survey on submanifolds in Poisson geometry).

No wonder, then, that Poisson transversals showed up already in the earliest infancy of Poisson geometry, namely, in
the foundational paper of A. Weinstein \cite{Wein83}. Namely, if $L$ is a symplectic leaf and $x\in L$, then a
submanifold $X$ that intersects $L$ transversally at $x$ and has complementary dimension is a Poisson transversal,
and its induced Poisson structure governs much of the geometry transverse to $L$. In fact, a small enough tubular neighborhood of $L$ in $M$ will have the property that all its fibres are Poisson transversals. Such fibrations are nowadays called \emph{Poisson fibrations}, and were studied by Y. Vorobjev in \cite{Vor} -- mostly in connection to the local structure around symplectic leaves -- and also by R. Fernandes and O. Brahic in \cite{RuiOli}. That Poisson fibrations are related to H\ae fliger's formalism of geometric structures described by groupoid-valued cocycles (see \cite{Haef58} and also \cite{Gromov86}) -- of which the ``automatic transversality'' of Lemma \ref{lem : Poisson pulls back Poisson transversals} is also reminiscent -- should not escape notice. 
In fact, in Physics literature, Poisson fibrations have been known for long in the guise of \emph{second class constraints}, and motivated the introduction by P. Dirac of what we know today as the induced Dirac bracket \cite{Dirac50}, which in our language is the induced Poisson structure on the fibres.

The role played by Poisson transversals in Poisson geometry is similar to that played by symplectic submanifolds in
symplectic geometry and by transverse submanifolds in foliation theory (see the examples in the next section). The key
observation is that the transverse geometry around a Poisson transversal $X$ is of non-singular and
\emph{contravariant} nature: it behaves more like a two-form than as a bivector in the directions conormal to $X$.
This allows us to make particularly effective use of the tools of ``contravariant geometry''. In the core of our arguments lies the fact that the contravariant exponential map $\exp_{\mathcal{X}}$ associated to a Poisson spray $\mathcal{X}$ gives rise to a tubular neighborhood adapted to $X \subset (M,\pi)$, in complete analogy with the classical construction of a tubular neighborhood of a submanifold
$X$ in a Riemannian manifold $(M,g)$, thus effectively reducing many problems to the symplectic case.

\medskip

The main result of this paper is a local normal form theorem around Poisson transversals, which simultaneously
generalizes Weinstein's splitting theorem \cite{Wein83} and Weinstein's symplectic neighborhood theorem
\cite{Wein71}. At a Poisson transversal $X$ of $(M,\pi)$, the restriction of the Poisson bivector $\pi|_{X}\in
\Gamma(\bigwedge^2TM|_{X})$ determines two objects:
\begin{itemize}
\item a Poisson structure on $X$, denoted $\pi_{X}$,
\item a nondegenerate two-form on the conormal bundle $p:\Co X\to X$, denoted
\[w_X\in\Gamma(\bigwedge^2NX).\]
\end{itemize}
Let $\widetilde{\sigma}$ be a closed 2-form on $\Co X$, that extends $\sigma:=-w_X$, i.e.\
\[\widetilde{\sigma}|_{T(N^{\ast}X)\vert_{X}}=\sigma.\]
To such an extension we associate a Poisson structure $\pi(\widetilde{\sigma})$ on an open
$U(\widetilde{\sigma})\subset \Co X$ around $X$. The symplectic leaves of $\pi(\widetilde{\sigma})$ are in one-to-one
correspondence with the leaves of $\pi_{X}$; namely if $(L,\omega_L)$ a leaf of $\pi_{X}$, the corresponding leaf of
$\pi(\widetilde{\sigma})$ is an open $\widetilde{L}\subset p^{-1}(L)$ around $L$ endowed with the 2-form
$\omega_{\widetilde{L}}:=p^*(\omega_L)+\widetilde{\sigma}|_{\widetilde{L}}$. The Poisson manifold
$(U(\widetilde{\sigma}),\pi(\widetilde{\sigma}))$ is the local model of $\pi$ around $X$. We will provide a more conceptual description of the local model using Dirac geometry.

\begin{theorem1}\label{theorem 1}
Let $(M,\pi)$ be a Poisson manifold and $X\subset M$ be an embedded Poisson transversal. An open neighborhood of
$X$ in $(M,\pi)$ is Poisson diffeomorphic to an open neighborhood of $X$ in the local model
$(U(\widetilde{\sigma}),\pi(\widetilde{\sigma}))$.
\end{theorem1}

Under stronger assumptions (which always hold around points in $X$) we can provide an even more explicit description
of the normal form. Assuming \emph{symplectic} triviality of the conormal bundle to $X$, the theorem implies a
generalized version of the Weinstein splitting theorem, expressing the Poisson as a product, i.e., in the form
(\ref{superlocal normal form : coordinates}) below. This coincides with Weinstein's setting when we look at (small)
Poisson transversals of complementary dimension to a symplectic leaf.

The proof of Theorem 1 relies on the symplectic realization constructed in \cite{CrMar11} with the aid of global
Poisson geometry, and on elementary Dirac-geometric techniques; the former is the crucial ingredient that allows us to
have a good grasp of directions conormal to the Poisson transversal, and the latter furnishes the appropriate language
to deal with objects which have mixed covariant-contravariant behavior. As an illustration of the strength and
canonicity of our methods, we present as an application the proof of an equivariant version of Weinstein's splitting
theorem. Other applications of the normal form theorem, which reveal the Poisson-topological aspects of Poisson
transversals, will be treated elsewhere.

\begin{theorem2}
Let $(M,\pi)$ be a Poisson manifold and let $G$ be a compact Lie group acting by Poisson diffeomorphisms on $M$. If
$x\in M$ is a fixed point of $G$, then there are coordinates
$(p_1,\ldots,p_n,q_1,\ldots,q_n,y_1,\ldots,y_m)\in\mathbb{R}^{2n+m}$ centered at $x$ such that
\begin{equation}\label{superlocal normal form : coordinates}
\pi=\sum_{i=1}^n\frac{\partial}{\partial q_i}\wedge\frac{\partial}{\partial p_i}+\frac{1}{2}\sum_{j,k=1}^m \varpi_{j,k}(y)\frac{\partial}{\partial y_j}\wedge\frac{\partial}{\partial y_k},
\end{equation}
and in these coordinates $G$ acts linearly and keeps the subspaces $\mathbb{R}^{2n}\times \{0\}$ and $\{0\}\times\mathbb{R}^m$ invariant.
\end{theorem2}

This answers in the negative a question posed by E. Miranda and N. Zung about the necessity of the ``tameness''
condition they assume in their proof of this result in \cite{MirZung06}. We wish to thank E. Miranda for bringing this
problem to our attention.

\medskip

We should probably also say a few words about terminology. Poisson transversals were also referred to as
\emph{cosymplectic submanifolds} in the literature, and this is motivated by the fact that the conormal directions to
such a submanifold are symplectic i.e.\ the Poisson tensor is nondegenerate on the conormal bundle to the submanifold.
Even though this nomenclature is perfectly reasonable, there are several reasons why we decided not to use this name.
Foremost among these:
\begin{enumerate}
\item There is already a widely used notion of a cosymplectic manifold, defined as a manifold of dimension $2n+1$, endowed with a closed 1-form $\theta$ and a closed 2-form $\omega$ such that $\theta\wedge \omega^{n}$ is a volume form.
\item The general point of view of transverse geometric structures is of great insight into
Poisson transversals when we rephrase the problem in terms of Dirac structures and contravariant geometry.
Moreover, the proximity between the dual pairs used in the proof of the normal form theorem, and the gadget of Morita equivalence, which is known to govern the transverse geometry to the symplectic leaves, is too obvious to ignore.
\end{enumerate}

\medskip

\noindent \textbf{Acknowledgments.} We would like to thank Marius Crainic for useful discussions. The first author was supported by the NWO Vrije Competitie project ``Flexibility and Rigidity of Geometric Structures'' no.\ 612.001.101 and the second by the ERC Starting Grant no.\ 279729.

\section{Some basic properties of Poisson transversals}

Let $(M,\pi)$ be a Poisson manifold. A {\bf Poisson transversal} in $M$ is an embedded submanifold ${X} \subset M$
that meets each symplectic leaf of $\pi$ {\em transversally} and {\em symplectically}. We translate both these
conditions algebraically. Let $x\in X$ and let $(L,\omega)$ be the symplectic leaf through $x$. Transversality
translates to
\[T_xX+T_xL=T_xM.\]
Taking annihilators in this equation, we obtain that $\Co_xX\cap \ker(\pi^{\sharp}_x)=\{0\}$, or equivalently, that the restriction of
$\pi^{\sharp}$ to $\Co_xX$ is injective:
\begin{equation}\label{EQ1}
0\longrightarrow \Co_xX\stackrel{\pi_x^{\sharp}}{\longrightarrow}T_xM.
\end{equation}
For the second condition, note that the kernel of $\omega_x|_{T_xX\cap T_xL}$ is $T_xX\cap \pi_x^{\sharp}(\Co_xX)$. So the
condition that $T_xX\cap T_xL$ be a symplectic subspace is equivalent to
\begin{equation}\label{EQ2}
T_xX\cap \pi_{x}^{\sharp}(\Co_xX)=\{0\}.
\end{equation}
Since $T_xX$ and $\Co_xX$ have complementary dimensions, (\ref{EQ1}) and (\ref{EQ2}) imply the following
decomposition, which is equivalent to $X$ being a Poisson transversal:
\begin{equation}\label{EQ3}
TX\oplus \pi^{\sharp}(\Co X)=TM|_{X}.
\end{equation}

The decomposition of the tangent bundle (\ref{EQ3}) gives canonically an embedded normal bundle, denoted
\[\N{X}:=\pi^{\sharp}(\Co X)\subset TM|_{X},\]
and a corresponding decomposition for the conormal bundle
\begin{equation*}\label{EQ4}
\Co X\oplus \N^{\circ}{X}=T^*M|_{X}.
\end{equation*}
For $\xi\in\Co_xX$ and $\eta\in \N^{\circ}_x{X}$, we have that $\pi^{\sharp}(\xi)\in \N_x X$, hence $\pi(\xi,\eta)=0$. This
implies that $\pi|_{X}$ has no mixed component in the decomposition
\[\bigwedge^2TM|_{X}=\bigwedge^2T{X}\oplus \left(TX\otimes NX\right)\oplus\bigwedge^2NX.\]
Therefore $\pi|_{X}$ splits as
\[\pi|_{X}=\pi_{X}+w_{X},\ \ \pi_{X}\in \Gamma(\bigwedge^2TX), \ w_X\in \Gamma(\bigwedge^2NX).\]

It is well known that these two tensors satisfy the following properties, but for completeness we include a proof.
\begin{lemma}
The bivector $\pi_{X}$ is Poisson and $w_{X}$, regarded as a 2-form on $\Co X$, is fibrewise nondegenerate.
\end{lemma}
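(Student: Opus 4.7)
My plan is to handle the two claims separately, both resting on the splitting $T^*M|_X = N^\circ X \oplus N^*X$ dual to $TM|_X = TX \oplus NX$, together with the key observation established just above the statement: $\pi|_X$ has no mixed component in the decomposition $\bigwedge^2 TM|_X = \bigwedge^2 TX \oplus (TX\otimes NX) \oplus \bigwedge^2 NX$.

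For the nondegeneracy of $w_X$, I would show that $w_X^\sharp : N^*X \to NX$ coincides with the restriction of $\pi^\sharp$. This is a one-line linear computation: for $\xi, \eta \in N^*X$, pairing $\pi^\sharp(\xi)$ with $\eta$ picks out only the $\bigwedge^2 N^*X$ component of $\pi|_X$, which is exactly $w_X$. Since $\pi^\sharp|_{N^*X}$ is, by the very definition $NX = \pi^\sharp(N^*X)$ combined with the injectivity \eqref{EQ1}, an isomorphism $N^*X \diffto NX$, this proves the nondegeneracy.

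For the Poisson property of $\pi_X$, my plan is to descend the bracket from $M$ via extensions. Call $\tilde f\in C^\infty(M)$ a \emph{suitable extension} of $f\in C^\infty(X)$ if $d\tilde f|_X$ lies in $N^\circ X$; such extensions always exist, for instance by pulling back $f$ along the projection of a tubular neighbourhood adapted to the splitting $TM|_X = TX\oplus NX$. The first step would be to observe that for one suitable extension $\tilde f$ and \emph{any} extension $F$ of a function on $X$, the vanishing of the mixed component of $\pi|_X$ gives
\[ \pi(d\tilde f, dF)\big|_X = \pi_X(df, d(F|_X)); \]
in particular $\{\tilde f, \tilde g\}|_X$ is independent of the choices of suitable extensions and equals $\pi_X(df, dg) =: \{f,g\}_X$.

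Jacobi then follows by a bootstrap. Picking suitable $\tilde f, \tilde g, \tilde h$, the function $\{\tilde g, \tilde h\}$ is some (a priori not suitable) extension of $\{g,h\}_X$, so the displayed identity applied to $\tilde f$ and $F := \{\tilde g, \tilde h\}$ yields $\{\tilde f, \{\tilde g, \tilde h\}\}|_X = \{f, \{g,h\}_X\}_X$, and likewise for cyclic permutations. Summing and invoking Jacobi for $\pi$ on $M$ produces Jacobi for $\pi_X$. The main point requiring care is that suitability must be imposed in only one slot: requiring it in both would derail the bootstrap, since Poisson brackets of suitable functions need not themselves be suitable, but only one suitable slot is needed for the bracket on $X$ to already be determined.
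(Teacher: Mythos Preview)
Your proposal is correct. The nondegeneracy argument is essentially identical to the paper's (modulo a slip: you wrote ``$\bigwedge^2 N^*X$ component'' where you meant $\bigwedge^2 NX$; the bivector $w_X$ lives in $\bigwedge^2 NX$).

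For the Jacobi identity, however, you take a genuinely different route. The paper argues via Dirac geometry: it checks the inclusion $L_{\pi_X}\subset i^*L_\pi$ for the inclusion $i:X\hookrightarrow M$, which by a rank count forces equality; since pullbacks of Dirac structures are automatically Courant-involutive, $L_{\pi_X}$ is Dirac and $\pi_X$ is Poisson. Your argument is the classical extension-and-restriction method: choose extensions $\tilde f$ with $d\tilde f|_X\in N^\circ X$, prove $\{\tilde f,F\}|_X = \{f,F|_X\}_X$ whenever only the \emph{first} slot is suitable, and bootstrap Jacobi from $M$ to $X$. You correctly isolate the key point that suitability is required in only one argument, which is precisely what lets the bootstrap go through (since $\{\tilde g,\tilde h\}$ need not itself be suitable). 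Your approach is more elementary and entirely self-contained, requiring no Dirac machinery; the paper's approach is shorter and more conceptual, and it prefigures the Dirac-geometric language (pullbacks and gauge transforms of $L_\pi$) that the paper relies on heavily in constructing the local model and proving the main theorem, so it fits the surrounding narrative better.
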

\begin{proof}
To prove that $\pi_{X}$ is Poisson, we will use Dirac-geometric techniques (for other approaches, see
\cite{CrFer04,Xu03}; for the basics of Dirac geometry, see \cite{BR}). It suffices to show that the pullback via the
inclusion $i:X\to M$ of the Dirac structure $L_{\pi}:=\{\pi^{\sharp}(\xi)+\xi : \xi\in T^*M\}$ equals the almost Dirac structure
$L_{\pi_{X}}:=\{\pi_{X}^{\sharp}(\xi)+\xi : \xi\in T^*X\}$, since this makes $L_{\pi_{X}}$ automatically involutive, and hence
$\pi_{X}$ Poisson. But to show this it suffices to prove the following inclusion:
\begin{align*}
L_{\pi_{X}}&=\lbrace \pi_{X}^{\sharp}(\xi)+\xi:\xi \in T^{\ast}X \rbrace = \lbrace \pi_{X}^{\sharp}(i^{\ast}\eta)+i^{\ast}\eta:\eta \in N^{\circ}X \rbrace =\\
&=\lbrace \pi^{\sharp}(\eta)+i^{\ast}\eta:\eta \in N^{\circ}X \rbrace  \subset i^{\ast}L_{\pi},
\end{align*}
where we used that $w_{X}^{\sharp}(\eta)=0$, for $\eta\in N^{\circ}X$.

The map $w_{X}^{\sharp}:\Co X\to NX$ is just the restriction of $\pi$, which, by the decomposition (\ref{EQ3}), is a linear isomorphism.
\end{proof}

We recall three natural instances of Poisson transversals, which appear throughout Poisson geometry:

\begin{example1}\rm
If $\pi$ is nondegenerate then $X$ is a Poisson transversal if and only if $X$ is a symplectic submanifold of $(M,\pi)$.
\end{example1}
\begin{example2}\rm
If $L$ is the symplectic leaf of $(M,\pi)$ through a point $x\in M$, a submanifold $X$ that intersects $L$ transversally at $x$ and is of complementary dimension is a Poisson transversal around $x$.
\end{example2}
\begin{example3}\rm
If $(M,\pi)$ is a regular Poisson manifold with underlying foliation $\mathcal{F}$ of codimension $q$, then every submanifold $X$ of dimension $q$ that is transverse to $\mathcal{F}$ is a Poisson transversal.
\end{example3}

A very useful -- and somewhat surprising -- fact about Poisson transversals is that they behave well with respect to Poisson maps:

\begin{lemma}\label{lem : Poisson pulls back Poisson transversals}
Let $\varphi:(M_0,\pi_0)\to (M_1,\pi_1)$ be a Poisson map and $X_1\subset M_1$ be a Poisson transversal. Then:
\begin{enumerate}
 \item $\varphi$ is transverse to $X_1$;
 \item $X_0:=\varphi^{-1}(X_1)$ is also a Poisson transversal;
 \item $\varphi$ restricts to a Poisson map $\varphi|_{X_0}:(X_0,\pi_{X_0}) \to (X_1,\pi_{X_1})$;
 \item The differential of $\varphi$ along $X_0$ restricts to a fibrewise linear isomorphism between embedded normal bundles $\varphi_*|_{NX_0}:NX_0\to NX_1$;
 \item The map $F:N^*X_0\to N^*X_1$, $F(\xi)=(\varphi^*)^{-1}(\xi)$, $\xi\in N^*X_0$ is a fibrewise linear symplectomorphism between the symplectic vector bundles
 \[F:(N^*X_0,w_{X_0})\to (N^*X_1,w_{X_1}).\]
\end{enumerate}
\end{lemma}

\begin{corollary}
Let $(M,\pi)$ be a Poisson manifold, $X\subset M$ be a Poisson transversal and $W\subset M$ be a Poisson
submanifold. Then $W$ and $X$ intersect transversally, and $X \cap W$ is:
\begin{itemize}
 \item a Poisson transversal in $(W,\pi|_{W})$, and
 \item a Poisson submanifold of $(X,\pi_X)$.
\end{itemize}
\end{corollary}

\begin{proof}[Proof of Lemma \ref{lem : Poisson pulls back Poisson transversals}]
Consider $x\in X_0$ and let $y:=\varphi(x)\in X_1$. Since $\varphi$ is a Poisson map we have:
\[\pi_{1}(\eta)=\varphi_{\ast}\left(\pi_0(\varphi^{\ast}\eta)\right),\textrm{ for all }\eta\in T^{\ast}_yM_1,\]
therefore $\pi_{1}(T_y^{\ast}M_1)\subset \varphi_{\ast}(T_xM_0)$. But $X_1$ being a Poisson transversal now implies that $\varphi$ is transverse to
$X_1$:
\[T_yM_1=T_y X_1+\pi_1(T_y^{\ast}M_1)=T_y X_1+\varphi_{\ast}(T_xM_0).\]

In particular, $X_0$ is a submanifold of $M_0$. To show that $X_0$ is a Poisson transversal, we will prove that the
decomposition $T X_0 \oplus \pi_0(N^{\ast} X_0 )=TM_0|_{ X_0 }$ holds. Note first that
\[T_x X_0=(\varphi_{\ast})^{-1}(T_y X_1)\textrm{  and  }N^{\ast}_x X_0=\varphi^{\ast}(N^{\ast}_y X_1)\]
Let $v\in T_xM_0$, and decompose $\varphi_{\ast}v=u+\pi_1(\eta)$, with $u\in T_y X_1$ and $\eta\in N^{\ast}_y X_1$. Then
$\varphi^{\ast}\eta\in N^{\ast}_x X_0$ and $w:=v-\pi_0(\varphi^{\ast}\eta)$ projects to $u$, hence $w\in T_x X_0$. This shows that $v=w+\pi_0(\varphi^{\ast}\eta)\in T_x X_0+\pi_0(N^{\ast}_x X_0)$, hence
\[T_xM_0=T_x X_0+\pi_0(N^{\ast}_x X_0).\]
Counting dimensions, we conclude that this is a direct sum decomposition, and therefore $X_0$ is a Poisson transversal.

Note moreover that $\varphi_*$ preserves the embedded normal bundles:
\[\varphi_{\ast}(N_x X_0)=\varphi_{\ast}(\pi_0(N^{\ast}_x X_0))=\varphi_{\ast}(\pi_0(\varphi^{\ast}(N^{\ast}_y X_1)))=\pi_1(N^{\ast}_y X_1)=N_y X_1,\]
and because they have the same rank, $\varphi_{\ast}|_{NX_0}$ is a fibrewise isomorphism. Since we also have $\varphi_*(T_xX_0)\subset T_yX_1$, the Poisson condition $\varphi_*(\pi_{0,x})=\pi_{1,y}$, implies that $\varphi_*(\pi_{X_{0},x})=\pi_{X_1,y}$ and $\varphi_*(w_{X_0,x})=w_{X_1,y}$. This implies (3) and (4).
\end{proof}

\section{The local model}

The local model around a Poisson transversal depends on an extra choice:

\begin{definition}\rm
Let $(E,\sigma)$ be a symplectic vector bundle over $X$. A {\bf closed extension} of $\sigma$ is a closed 2-form
$\widetilde{\sigma}$ defined on a neighborhood of $X$ in $E$, such that its restriction to $TE|_{X}=TX\oplus E$
equals $\sigma$. We denote the space of all closed extensions by $\Upsilon(E,\sigma)$.
\end{definition}

Closed extensions always exist, and can be constructed employing the standard de Rham homotopy operator (see e.g.
the ``Extension Theorem'' in \cite{Wein77}).


In the warm-up for the construction below of the local model, let us revisit the three instances which are generalized by our main result.

\begin{example}\rm[Weinstein's Symplectic Neighborhood Theorem, \cite{Wein71}]\label{normal form : weinstein neighborhood}
Let $(M,\omega)$ be a symplectic manifold, and $(X,\omega_{X})\subset M$ be a symplectic submanifold. The
symplectic orthogonal of $TX$, denoted by $E:=TX^{\omega}$, is a symplectic vector bundle with bilinear form
$\sigma:=\omega|_{E}$. The local model around $X$ is given by the closed 2-form
$\widetilde{\sigma}+p^*(\omega_{X})$ on $E$, where $p:E\to X$ is the projection and $\widetilde{\sigma}\in
\Upsilon(E,\sigma)$. Weinstein's symplectic neighborhood theorem says that a neighborhood of $X$ in $(M,\omega)$ is
symplectomorphic to a neighborhood of $X$ in $(E,\widetilde{\sigma}+p^*(\omega_{X}))$.
\end{example}

\begin{example}\rm[Weinstein's Splitting Theorem, \cite{Wein83}]\label{normal form : weinstein spiltting}
Let $(M,\pi)$ be a Poisson manifold and let $x\in M$. Let also $(L,\omega)$ be the symplectic leaf through $x\in M$,
and $(X,\pi_X)$ a Poisson transversal at $x$, of complementary dimension. The local model around $x$ is given
by the product of Poisson manifolds
\begin{equation*}
(T_xL,\omega_{x}^{-1})\times (X,\pi_{X}).
\end{equation*}
Weinstein's Splitting Theorem (or Darboux-Weinstein Theorem) asserts that $(M,\pi)$ is Poisson diffeomorphic around $x$ to an open around $(0,x)$ in the local model.
\end{example}

\begin{example}\rm[Transversals to Foliations]\label{normal form : Transversals to Foliations}
Let $M$ be a manifold carrying a smooth (regular) foliation $\mathcal{F}$, and let $X \subset M$ be a submanifold
transverse to $\mathcal{F}$,
\[T_xX+T_x\mathcal{F}=T_xM, \text{ for all }x \in X.\]
Let $\mathcal{F}_X$ be the induced foliation on $X$. The local model of the foliation $\mathcal{F}$ around $X$ is $(NX,p^{\ast}\mathcal{F}_X)$, where
$p:NX \to X$ is the normal bundle to $X$; note that the leaves of the local model are of the form $p^{-1}(L)$, for
$L$ a leaf of $\mathcal{F}_X$. To build an isomorphism between $\mathcal{F}$ and its model around $X$,
consider a metric $g$ on $T\mathcal{F}$ and let $\exp_{g}:T\mathcal{F} \supset U \to M$
denote the leafwise exponential map of $g$, i.e.\ for each leaf $L$, $\exp_{g}:(TL\cap U) \to L$ is the
(Levi-Civita) exponential map of the Riemannian manifold $(L,g|_L)$. Then $T\mathcal{F}_X^{\perp}\subset T\mathcal{F}|_X$ is a complement to $TX$ in $TM|_X$, and the composition
\[NX\diffto T\mathcal{F}_X^{\perp}\stackrel{\exp_g}{\rmap}M\]
pulls the foliation $\mathcal{F}$ to the local model.
\end{example}

The idea for constructing the local model around a Poisson transversal is to put the foliation in normal form in the
sense of Example \ref{normal form : Transversals to Foliations}, and then perform Weinstein's construction of Example
\ref{normal form : weinstein neighborhood} along all symplectic leaves simultaneously.

Let $(E,\sigma)$ be a symplectic vector bundle over a Poisson manifold $(X,\pi_{X})$ with projection $p:E\to X$ and
consider a closed extension $\widetilde{\sigma}\in \Upsilon(E,\sigma)$. As mentioned in the Introduction, the
symplectic leaves of the local model are $(\widetilde{L},\omega_{\widetilde{L}})$, for $(L,\omega_L)$ a symplectic
leaf of $(X,\pi_{X})$, where $\widetilde{L}\subset p^{-1}(L)$ is an open containing $L$ and
\[\omega_{\widetilde{L}}:=\widetilde{\sigma}|_{\widetilde{L}}+p^*(\omega_{L}).\]
To show that this construction yields a smooth Poisson bivector around $X$, we rewrite it using the language of Dirac
geometry. Let $L_{\pi_{X}}$ be the Dirac structure corresponding to $\pi_{X}$. Dirac structures can be pulled back
along submersions. The pullback of $L_{\pi_{X}}$ to $E$, denoted by $p^*(L_{\pi_{X}})$, has presymplectic leaves
$(p^{-1}(L),p^*(\omega_L))$, where $(L,\omega_L)$ is a symplectic leaf of $\pi_{X}$. Finally, the gauge transform by
$\widetilde{\sigma}$, denoted by $p^*(L_{\pi_{X}})^{\widetilde{\sigma}}$, has the required effect: it adds to each
leaf the restriction of $\widetilde{\sigma}$.

\begin{lemma}\label{Model structure}
Let $(E,\sigma)$ be a symplectic vector bundle over a Poisson manifold $(X,\pi_{X})$, and let $\widetilde{\sigma}\in
\Upsilon(E,\sigma)$ be a closed extension of $\sigma$. On a neighborhood $U(\widetilde{\sigma})$ of $X$ in $E$, we
have that the Dirac structure
\[L(\widetilde{\sigma}):=p^*(L_{\pi_{X}})^{\widetilde{\sigma}}\]
corresponds to a Poisson structure $\pi(\widetilde{\sigma})$ that decomposes along $X$ as
\[\pi(\widetilde{\sigma})|_{X}=\pi_{X}+\sigma^{-1}\in \Gamma(\bigwedge^2TX)\oplus\Gamma(\bigwedge^2E).\]
Equivalently, $(X,\pi_{X})$ is a Poisson transversal for $\pi(\widetilde{\sigma})$, the canonical normal bundle is
$E\subset TE|_{X}$, and the induced nondegenerate bivector is $w_X=\sigma^{-1}$.
\end{lemma}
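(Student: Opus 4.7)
The approach is to compute $L(\widetilde{\sigma})$ explicitly at points of the zero section $X\subset E$, check that it is the graph of the bivector $\pi_X+\sigma^{-1}$ there, and conclude by an openness argument. First I would observe that $L(\widetilde{\sigma})$ is a genuine Dirac structure on the domain of $\widetilde{\sigma}$: pullback of a Dirac structure along a submersion is automatically Dirac, and gauge transformation by a closed $2$-form preserves the Dirac property, so both operations assembling $L(\widetilde{\sigma})=p^{\ast}(L_{\pi_{X}})^{\widetilde{\sigma}}$ are safe.

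The heart of the matter is the pointwise computation along $X$. At $0_x\in X$ one has $T_{0_x}E=T_xX\oplus E_x$ and $T^{\ast}_{0_x}E=T^{\ast}_xX\oplus E^{\ast}_x$; unwinding the definition of the Dirac pullback gives
\[
p^{\ast}(L_{\pi_{X}})_{0_x}=\bigl\lbrace\bigl((\pi_X^{\sharp}(\xi),e),(\xi,0)\bigr):\xi\in T^{\ast}_xX,\ e\in E_x\bigr\rbrace.
\]
The closed-extension condition forces $\widetilde{\sigma}|_{0_x}(u_1+e_1,u_2+e_2)=\sigma_x(e_1,e_2)$, so the contraction $i_{(\pi_X^{\sharp}(\xi),e)}\widetilde{\sigma}|_{0_x}$ equals $(0,i_e\sigma_x)$, and the gauge transform $(v,\alpha)\mapsto(v,\alpha+i_v\widetilde{\sigma})$ produces
\[
L(\widetilde{\sigma})_{0_x}=\bigl\lbrace\bigl((\pi_X^{\sharp}(\xi),e),(\xi,i_e\sigma_x)\bigr):\xi\in T^{\ast}_xX,\ e\in E_x\bigr\rbrace.
\]
Nondegeneracy of $\sigma_x$ makes $(\xi,e)\mapsto(\xi,i_e\sigma_x)$ a linear isomorphism, so the projection $L(\widetilde{\sigma})_{0_x}\to T^{\ast}_{0_x}E$ is bijective, which means $L(\widetilde{\sigma})_{0_x}$ is the graph of a bivector. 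Inverting the parametrization exhibits this bivector via the sharp map sending $(\xi,\beta)$ to $(\pi_X^{\sharp}(\xi),e)$ where $e\in E_x$ is the unique element satisfying $i_e\sigma_x=\beta$, i.e.\ precisely $\pi_X+\sigma^{-1}$.

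Being the graph of a bivector is the open condition $L\cap TE=0$, by upper semi-continuity of $\dim(L\cap TE)$, and we have just verified it along $X$; hence on a neighborhood $U(\widetilde{\sigma})$ of $X$ in $E$ the Dirac structure $L(\widetilde{\sigma})$ is the graph of a bivector $\pi(\widetilde{\sigma})$, which is Poisson because $L(\widetilde{\sigma})$ is integrable. The ``equivalently'' clause is then immediate: since $X$ sits in $E$ as the zero section, $\Co X=E^{\ast}$, the decomposition gives $\pi(\widetilde{\sigma})^{\sharp}(\Co X)=E\subset TE|_{X}$, complementary to $TX$, so $X$ is a Poisson transversal with canonical normal bundle $E$ and induced nondegenerate bivector $w_X=\sigma^{-1}$. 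The one step that requires real care is the pullback-and-gauge calculation along $X$ -- aligning the sign conventions for Dirac pullback, gauge transform, and the identification $\sigma\leftrightarrow\sigma^{-1}$; once these are pinned down, the openness extension and the Poisson-transversal verification are routine.
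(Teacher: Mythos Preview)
Your proposal is correct and follows essentially the same route as the paper: both arguments note that being Poisson is an open condition, compute $p^{\ast}(L_{\pi_X})$ along $X$, apply the gauge transform using $\widetilde{\sigma}|_{TE|_X}=\sigma$, and recognize the result as the graph of $\pi_X+\sigma^{-1}$. You are somewhat more explicit about why pullback and gauge preserve the Dirac property and about the openness mechanism, but the underlying computation and strategy are identical.
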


\begin{proof}
The condition that $L(\widetilde{\sigma})$ be Poisson is open, thus it suffices to show that $L(\widetilde{\sigma})$ has the
expected form along $X$. This can be easily checked, since
\[p^*(L_{\pi_{X}})|_{X}=\{\pi_{X}^{\sharp}(\xi)+Y+\xi : \xi\in T^*X, \ Y\in E\},\]
and therefore
\begin{align*}
L(\widetilde{\sigma})|_{X}&=\{\pi_{X}^{\sharp}(\xi)+Y+\xi+\iota_{Y}\sigma : \xi\in T^*X, \ Y\in E\}=\\
&=\{\pi_{X}^{\sharp}(\xi)+(\sigma^{-1})^{\sharp}(\eta)+\xi+\eta : \xi\in T^*X, \ \eta\in E^*\}=\\
&=\{(\pi_{X}+\sigma^{-1})^{\sharp}(\theta)+\theta : \theta\in T^*E|_{X}\}.\qedhere
\end{align*}
\end{proof}

\begin{definition}\rm
The Poisson manifold $(U(\widetilde{\sigma}),\pi(\widetilde{\sigma}))$ from the Lemma is called the \textbf{local
model} associated to $(E,\sigma)$ and $(X,\pi_{X})$.

If $X$ is a Poisson transversal of a Poisson manifold $(M,\pi)$, $\pi_{X}$ is the induced Poisson structure on $X$,
$E=\Co X$ is the conormal bundle to $X$ and $\sigma=-w_X=-(\pi|_{N^*X})$, then
$(U(\widetilde{\sigma}),\pi(\widetilde{\sigma}))$ is called the \textbf{local model of $\pi$ around} $X$.
\end{definition}

\begin{remark}\rm
We point out that there is a choice in having the local models of $\pi$ around $X$ live in the \emph{conormal} bundle
to $X$, as opposed to its normal bundle $NX$, as is typically the case for normal form theorems. In fact,
since\[w_X:(\Co X,-w_X) \to (NX,w_X^{-1})\]is an isomorphism of symplectic vector bundles, we can translate
canonically all our constructions to $NX$ via $w_X$.

That we chose $\Co X$ instead of $NX$ is meant to emphasize that we regard the conormal $\Co X$ as the
more appropriate notion of ``contravariant normal'', an opinion which is corroborated by the scheme of proof of
Theorem, where we spread out a tubular neighborhood of $X$ by following contravariant geodesics starting in
directions conormal to $X$.
\end{remark}

The construction of the local model depends on the choice of a closed extension. We prove a Poisson version of the Moser
argument, which we later employ to prove that different extensions induce isomorphic local models.

\begin{lemma}[Moser Lemma]\label{Moser Lemma : Poisson}

Suppose we are given a path of Poisson structures of the form $t \mapsto \pi_t:=\pi^{td\alpha}$, where $\pi$ is a
Poisson structure and $\alpha \in \Omega^1(M)$. Then the isotopy $\phi^{t,s}_{\mathcal{V}}$ generated by the
time-dependent vector field $\mathcal{V}_t:=-\pi_t^{\sharp}(\alpha)$ stabilizes $\pi_t$:
\[\phi^{t,s}_{\mathcal{V}\ast}\pi_s=\pi_t,\]
whenever this is defined.
\end{lemma}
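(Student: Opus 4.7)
The plan is to apply the Moser trick in contravariant form. The desired identity $(\phi^{t,s}_{\mathcal{V}})_{\ast}\pi_s=\pi_t$ is equivalent to the constancy in $t$ of $(\phi^{t,s}_{\mathcal{V}})^{\ast}\pi_t$. Differentiating and using that $\phi^{t,s}_{\mathcal{V}}$ is a diffeomorphism, this reduces to the infinitesimal equality
\[
\dot{\pi}_t+\mathcal{L}_{\mathcal{V}_t}\pi_t=0,\qquad\text{equivalently}\qquad \dot{\pi}_t=\mathcal{L}_{\pi_t^{\sharp}\alpha}\pi_t,
\]
which is the content that actually needs to be proved.

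First I would compute the left-hand side directly from the description of the gauge transform. Unwinding the definition, $\pi_t^{\sharp}$ is characterized by the operator identity $\pi_t^{\sharp}\circ(\id+t\,d\alpha^{\flat}\circ\pi^{\sharp})=\pi^{\sharp}$, where $d\alpha^{\flat}\colon TM\to T^{\ast}M$ denotes $Y\mapsto\iota_Y d\alpha$. Differentiating in $t$ and solving yields
\[
\dot{\pi}_t^{\sharp}=-\pi_t^{\sharp}\circ d\alpha^{\flat}\circ\pi_t^{\sharp}.
\]
For the right-hand side I would invoke the classical Poisson identity: for any Poisson bivector $\sigma$ and any 1-form $\beta$,
\[
(\mathcal{L}_{\sigma^{\sharp}\beta}\sigma)^{\sharp}=-\sigma^{\sharp}\circ d\beta^{\flat}\circ\sigma^{\sharp}.
\]
Since the gauge transform of a Poisson structure by a closed 2-form remains Poisson wherever it is defined (\v{S}evera--Weinstein), each $\pi_t$ is Poisson on its domain, and applying the identity with $\sigma=\pi_t$, $\beta=\alpha$ reproduces exactly the expression for $\dot{\pi}_t^{\sharp}$, completing the Moser argument.

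The main obstacle is the verification of the Poisson identity $\mathcal{L}_{\sigma^{\sharp}\beta}\sigma=-\sigma^{\sharp}\circ d\beta^{\flat}\circ\sigma^{\sharp}$. It is routine: expanding $\beta=\sum f_i\,dh_i$ locally, using $\mathcal{L}_{\sigma^{\sharp}dh_i}\sigma=0$ (Hamiltonian vector fields preserve $\sigma$), the derivation property of the Schouten bracket, and antisymmetrizing, all terms collect into the right-hand side. A more conceptual alternative, in the spirit of the paper, would be to perform the whole argument at the Dirac level: the infinitesimal motion $\dot{L}_{\pi_t}$ of the graph $L_{\pi_t}\subset TM\oplus T^{\ast}M$ under the gauge by $d\alpha$ is exactly cancelled by the Lie derivative of $L_{\pi_t}$ along $\mathcal{V}_t=-\pi_t^{\sharp}\alpha$, which is the same identity in disguise.
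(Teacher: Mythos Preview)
Your proposal is correct and follows essentially the same route as the paper: both derive $\dot{\pi}_t^{\sharp}=-\pi_t^{\sharp}\circ (d\alpha)^{\flat}\circ\pi_t^{\sharp}$ by differentiating the gauge identity $\pi_t^{\sharp}\circ(\id+t(d\alpha)^{\flat}\circ\pi^{\sharp})=\pi^{\sharp}$, and both match this against $L_{\mathcal{V}_t}\pi_t$ via the identity $(L_{\sigma^{\sharp}\beta}\sigma)^{\sharp}=-\sigma^{\sharp}\circ (d\beta)^{\flat}\circ\sigma^{\sharp}$. The only cosmetic difference is in justifying the latter identity: the paper invokes the chain map $(-1)^{\bullet+1}\wedge^{\bullet}\pi^{\sharp}:(\Omega^{\bullet},d)\to(\mathfrak{X}^{\bullet},d_{\pi})$ to write $L_{\mathcal{V}_t}\pi_t=d_{\pi_t}\pi_t^{\sharp}(\alpha)=-\wedge^2\pi_t^{\sharp}(d\alpha)$, whereas you propose a direct local expansion $\beta=\sum f_i\,dh_i$---the same fact, proved in a slightly more hands-on way.
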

\begin{proof}
Recall that Poisson cohomology is computed by the complex $\left(\mathfrak{X}^{\bullet}(M), d_{\pi}\right)$, where
$d_{\pi}:\mathfrak{X}^{\bullet}(M) \to \mathfrak{X}^{\bullet+1}(M)$ is defined by $d_{\pi}:=[\pi,\cdot]$ and
$[\cdot,\cdot]$ stands for the Schouten bracket on multi-vector fields. Moreover, $\pi$, regarded as a map $\pi^{\sharp}:T^{\ast}M \to TM$, induces a chain map
\[(-1)^{\bullet+1}\wedge^{\bullet}\pi^{\sharp}: \left(\Omega^{\bullet}(M), d\right) \rmap \left(\mathfrak{X}^{\bullet}(M), d_{\pi}\right),\]
from the de Rham complex of differential forms, see e.g. \cite{DZ}. In particular,
\[L_{\mathcal{V}_t}\pi_t=[\pi_t,\pi_t^{\sharp}(\alpha)]= d_{\pi_t}\pi_t^{\sharp}(\alpha)=-\wedge^2\pi_t^{\sharp}(d\alpha).\]
As maps, this can be written as $L_{\mathcal{V}_t}\pi_t^{\sharp}=\pi_t^{\sharp}\circ (d\alpha)_{\flat} \circ \pi_t^{\sharp}$. Also, by the very definition of gauge transformation, we have the identity $\pi^{\sharp}=\pi_t^{\sharp}\circ(\id+t(d\alpha)_{\flat} \circ \pi^{\sharp})$, whence
\[\frac{d \pi_t^{\sharp}}{dt}\circ(\id+t(d\alpha)_{\flat}\circ \pi^{\sharp})+\pi_t^{\sharp}\circ (d\alpha)_{\flat}\circ \pi^{\sharp}=0 \Longrightarrow \frac{d\pi_t^{\sharp}}{dt}=-\pi_t^{\sharp}\circ (d\alpha)_{\flat}\circ \pi_t^{\sharp}.\]
Finally, we obtain
\begin{align*}
\frac{d}{dt}(\phi^{t,s}_{\mathcal{V}})^{\ast}\pi_t=(\phi^{t,s}_{\mathcal{V}})^{\ast}\left(L_{\mathcal{V}_t}\pi_t+\frac{d\pi_t}{dt} \right)=0,
\end{align*}
showing that $(\phi^{t,s}_{\mathcal{V}})^{\ast}\pi_t=\pi_s$.
\end{proof}

Next, we show that different choices of closed extensions yield isomorphic local models.

\begin{lemma}\label{lemma_indep}
Let $(E,\sigma)$ be a symplectic vector bundle over a Poisson manifold $(X,\pi_{X})$. All corresponding local models
are isomorphic around $X$ by diffeomorphisms that fix $X$ up to first order.
\end{lemma}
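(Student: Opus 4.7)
The proof calls for a Moser-type isotopy argument combined with a relative Poincaré lemma. Fix two closed extensions $\widetilde{\sigma}_0,\widetilde{\sigma}_1\in \Upsilon(E,\sigma)$. Their difference $\beta:=\widetilde{\sigma}_1-\widetilde{\sigma}_0$ is closed, and since both extensions restrict to $\sigma$ along $TE|_X$, we have $\beta|_{TE|_X}=0$ as a section of $\bigwedge^2 T^*E|_X$.

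The plan is to apply a fiberwise homotopy operator for $p:E\to X$ to produce a primitive of $\beta$ with strong vanishing on $X$. Let $V$ denote the Euler vector field on $E$ and $\mu_s:E\to E$ the fiberwise multiplication by $s$, so that $\mu_1=\id$ and $\mu_0$ is the projection to the zero section. Since $\mu_0^*\beta=p^*(i^*\beta)=0$, the standard homotopy formula gives $\beta=d\alpha$ with
\[
\alpha:=\int_0^1 \mu_s^*(\iota_V\beta)\,\frac{ds}{s}.
\]
The apparent singularity at $s=0$ is removable because $V$ vanishes on the zero section. A local computation in coordinates $(x,y)$ adapted to $X=\{y=0\}$ shows that, because both $\beta$ vanishes on $TE|_X$ (so its coefficients are $O(|y|)$) and $V=O(|y|)$, the integrand contributes an extra power of $|y|$, yielding $\alpha=O(|y|^2)$. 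Thus $\alpha$ vanishes along $X$ together with its first derivatives.

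Now form the path $\widetilde{\sigma}_t:=\widetilde{\sigma}_0+t\,d\alpha$, which is a path of closed extensions interpolating between $\widetilde{\sigma}_0$ and $\widetilde{\sigma}_1$. By Lemma \ref{Model structure}, $L(\widetilde{\sigma}_0)$ corresponds to the Poisson structure $\pi_0:=\pi(\widetilde{\sigma}_0)$ on a neighborhood of $X$; since the Poisson condition is open and $[0,1]$ is compact, we obtain on a common neighborhood $U$ of $X$ a smooth family of Poisson structures
\[
\pi_t:=\pi(\widetilde{\sigma}_t)=\pi_0^{\,t\,d\alpha},
\]
exactly of the form to which the Poisson Moser Lemma \ref{Moser Lemma : Poisson} applies.

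Invoking that lemma with the 1-form $\alpha$, the isotopy $\phi^{t,0}_{\mathcal{V}}$ generated by $\mathcal{V}_t=-\pi_t^{\sharp}(\alpha)$ satisfies $(\phi^{t,0}_{\mathcal{V}})_{*}\pi_0=\pi_t$ wherever defined. Because $\alpha$ vanishes to second order on $X$, so does $\mathcal{V}_t$; consequently the flow is defined on a uniform neighborhood of $X$ for all $t\in[0,1]$, it fixes $X$ pointwise, and its differential along $X$ is the identity on $TE|_X=TX\oplus E$. Evaluating at $t=1$ provides the desired Poisson diffeomorphism from $(U,\pi(\widetilde{\sigma}_0))$ to $(U,\pi(\widetilde{\sigma}_1))$ that fixes $X$ up to first order. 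The main delicate point is the last step of this procedure, namely establishing the second-order vanishing of $\alpha$ from the pointwise vanishing of $\beta$ on $TE|_X$; once that refinement of the relative Poincaré lemma is in hand, the rest follows mechanically from the Moser Lemma already proved.
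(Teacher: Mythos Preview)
Your proof is correct and follows essentially the same route as the paper: produce a primitive $\alpha$ of $\widetilde{\sigma}_1-\widetilde{\sigma}_0$ vanishing to second order along $X$, interpolate by $\pi_t=\pi_0^{t\,d\alpha}$, and apply the Poisson Moser Lemma~\ref{Moser Lemma : Poisson} to obtain an isotopy fixing $X$ to first order. The only cosmetic difference is that you spell out the fiberwise homotopy operator and the $O(|y|^2)$ estimate explicitly, whereas the paper simply invokes the Relative Poincar\'e Lemma from \cite{Wein77} for the existence of such a primitive.
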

\begin{proof}
If $\widetilde{\sigma}_1\in\Upsilon(E,\sigma)$ is a second extension, $\widetilde{\sigma}_1-\widetilde{\sigma}$ is a closed 2-form on $E$ that vanishes on $TE|_{X}$. Since the inclusion
$X\subset E$ is a homotopy equivalence, $\widetilde{\sigma}_1-\widetilde{\sigma}$ is exact, and one can choose a
primitive $\eta\in \Omega^1(E)$ that vanishes on $TE|_{X}$. Actually, c.f.\ the Relative Poincar\'e Lemma in
\cite{Wein77}, one may choose $\eta$ with vanishing first derivatives along $X$. Denote by
$\pi_0:=\pi(\widetilde{\sigma})$ and by $\pi_1:=\pi(\widetilde{\sigma}+d\eta)$. Then $\pi_1$ is the gauge transform by
$d\eta$ of $\pi_0$, denoted $\pi_1=\pi_0^{d\eta}$. These bivectors can be interpolated by the family of Poisson
structures
\[\pi_t:=\pi_0^{td\eta}, \  t\in [0,1].\]
Now, $\pi_t$ corresponds to the smooth family of Dirac structures
$L_t:=p^*(L_{\pi_{X}})^{\widetilde{\sigma}+td\eta}$, and the set $U\subset \mathbb{R}\times E$ of those points $(t,x)$ where $L_{t,x}$ is Poisson is open. Since $[0,1]\times X\subset U$, there is an open
neighborhood $V$ of $X$ in $E$ such that $[0,1]\times V\subset U$. Thus, $\pi_t$ is defined on $V$ for all $t\in [0,1]$.
By the Moser Lemma \ref{Moser Lemma : Poisson}, we see that the flow of the time-dependent vector field
\[Y_t:=-\pi_t^{\sharp}(\eta)\]
trivializes the family, i.e.\ $(\phi_{Y}^{t,s})^*(\pi_t)=\pi_s$ whenever it is defined. Since $\eta$ and its first
derivatives vanish along $X$, it follows that $\phi_{Y}^{t,s}$ fixes $X$ and that its differential is the identity on
$TE|_{X}$. Arguing as before, the set where $\phi_{Y}^{t,0}$ is defined up to $t=1$ contains an open neighborhood
$V'\subset V$ of $X$, so we obtain a Poisson diffeomorphism \[\phi_Y^{1,0}:(V',\pi_0)\diffto
(\phi_Y^{1,0}(V'),\pi_1).\qedhere\]
\end{proof}

\section{The normal form theorem}

The Normal Form Theorem \ref{theorem 1} for a Poisson structure $(M,\pi)$ around a Poisson transversal $X$ states
that $\pi$ and its local model (built out of $\pi|_{X}$) are isomorphic around $X$. In the symplectic case, this
follows from the Moser argument in a straightforward manner. For general Poisson manifolds, the proof is more
involved. The main difficulty is to put the foliation in normal form; namely, to find a tubular neighborhood of $X$ along
the leaves of $\pi$. If the foliation is regular, such a construction can be performed by restricting a metric to the leaves
and taking leafwise the Riemannian exponential (cf. Example \ref{normal form : Transversals to Foliations}). If $\pi$ is
not regular, it is not a priori clear if these maps glue to a smooth tubular neighborhood of $X$ in $M$. We will use
instead a ``contravariant'' version of this argument in which we replace the classical exponential from Riemannian
geometry by its Poisson-geometric analog: the contravariant exponential. The more surprising outcome is that a
contravariant exponential not only puts the foliation in normal form, but also provides a closed extension \emph{and}
the required isomorphism to the local model. A funny consequence is that a choice of Poisson spray $\mathcal{X}$ for
$(M,\pi)$ puts \emph{all} of its Poisson transversals in normal form canonically and simultaneously !

We start by recalling some notions and results from contravariant geometry:

\begin{definition}\rm
A {\bf Poisson spray} $\mathcal{X} \in \mathfrak{X}^1(T^{*}M)$ on a Poisson manifold $(M,\pi)$ is a vector field on
$T^{*}M$ such that:
\begin{enumerate}
\item $p_{*}\mathcal{X}(\xi)=\pi^{\sharp}(\xi)$, for all $\xi\in T^*M$
\item $m_t^{*}\mathcal{X}=t\mathcal{{X}}$, for all $t>0$,
\end{enumerate}
where $p:T^*M\to M$ is the projection and $m_t:T^{*}M \to T^{*}M$ is the multiplication by $t$. The flow
$\phi_{\mathcal{X}}^t$ of  $\mathcal{X}$ is called the {\bf geodesic flow}.

The {\bf contravariant exponential} of $\mathcal{X}$ is the map
\[\exp_{\mathcal{X}}:U\rmap M, \ \ \ \ \ \xi\mapsto p\circ\phi_{\mathcal{X}}^1(\xi),\]
on an open $U\subset T^*M$ where the geodesic flow is defined up to time 1. By abuse of notation, we will write
$\exp_{\mathcal{X}}:T^*M\to M$, as if it were defined on $T^*M$.
\end{definition}

Poisson sprays exist on every Poisson manifold. For example, if $\nabla$ is a connection on $T^*M$, then the map that
associates to $\xi\in T^*M$ the horizontal lift of $\pi^{\sharp}(\xi)$ is a Poisson spray.

The main feature of Poisson sprays is that they produce symplectic realizations:

\begin{theorem3}\cite{CrMar11}\label{existence of symplectic realizations}
Given $(M,\pi)$ a Poisson manifold and $\mathcal{X}$ a Poisson spray, there exists an open neighborhood  $\Sigma\subset T^*M$ of the zero section, on which the average of the canonical symplectic structure $\omega_{\text{can}} \in
\Omega^2(T^*M)$ under the geodesic flow
\begin{equation}\label{EQ_average}
 \Omega_{\mathcal{X}}:=\int_0^1\left(\phi_{\mathcal{X}}^t\right)^{*}\omega_{\text{can}}dt,
\end{equation}
is a symplectic structure on $\Sigma$, and the projection $p:(\Sigma,\Omega_{\mathcal{X}})\to (M,\pi)$ is a symplectic
realization (i.e.\ a surjective Poisson submersion).
\end{theorem3}

Let $X\subset (M,\pi)$ be a Poisson transversal. As before, we denote by $\pi_{X}$ the induced Poisson structure on
$X$, and by $w_X:=\pi|_{\Co X}$. We are ready to state the main result of this paper.

\begin{theorem1}[Detailed version]\label{theorem normal form, detailed}
Let $(M,\pi)$ be a Poisson manifold and let $X\subset M$ be a Poisson transversal. A Poisson spray $\mathcal{X}$
induces a closed extension of $\sigma:=-w_X$ in a neighborhood of $X$ in $\Co X$, given by
\[\widetilde{\sigma}_{\mathcal{X}}:=-\Omega_{\mathcal{X}} \vert_{\Co X}\in \Upsilon(\Co X,\sigma).\]
The corresponding local model $\pi(\widetilde{\sigma}_{\mathcal{X}})$ is isomorphic to $\pi$ around $X$. Explicitly,
a Poisson diffeomorphism between opens around $X$ is given by the map
\[\exp_{\mathcal{X}}:(\Co X, \pi(\widetilde{\sigma}_{\mathcal{X}}))\diffto (M,\pi).\]
\end{theorem1}

For the proof of Theorem 1, we need some properties of dual pairs. Recall  \cite{Wein71}:

\begin{definition}\rm
A {\bf dual pair} consists of a symplectic manifold $(\Sigma,\Omega)$, two Poisson manifolds $(M_0,\pi_0)$ and
$(M_1,\pi_1)$, and two Poisson submersions
\[(M_0,\pi_0)\stackrel{s}{\lmap}(\Sigma,\Omega)\stackrel{t}{\rmap}(M_1,\pi_1)\]
with symplectically orthogonal fibres:
\[\ker ds^{\Omega}=\ker dt.\]
The pair is called a {\bf full dual pair}, if $s$ and $t$ are surjective.
\end{definition}

Dual pairs and Poisson transversals interact pretty well, as shows the following:

\begin{lemma}\label{Lemma_restricting_dual_pairs}
Let $(M_0,\pi_0)\stackrel{s}{\lmap}(\Sigma,\Omega)\stackrel{t}{\rmap}(M_1,\pi_1)$ be a dual pair, and let
$X_0\subset M_0$ and $X_1\subset M_1$ be Poisson transversals. Then $\overline{\Sigma}:=s^{-1}(X_0)\cap
t^{-1}(X_1)$ is a symplectic submanifold that fits into the dual pair
\[(X_0,\pi_{X_0})\stackrel{s}{\lmap}(\overline{\Sigma},\Omega|_{\overline{\Sigma}})\stackrel{t}{\rmap}(X_1,\pi_{X_1}).\]
\end{lemma}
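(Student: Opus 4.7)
The plan is to apply Lemma \ref{Poisson pulls back Poisson transversals} twice in succession, slicing first by $s$ and then by $t$. Applied to the Poisson submersion $s:(\Sigma,\Omega^{-1})\rmap (M_0,\pi_0)$ and the Poisson transversal $X_0$, the lemma gives that $s$ is transverse to $X_0$, that $\Sigma_0:=s^{-1}(X_0)$ is a Poisson transversal of $\Sigma$, and that $s|_{\Sigma_0}:\Sigma_0\rmap X_0$ is Poisson. Since $\Sigma$ is symplectic, Example 1 identifies $\Sigma_0$ as a symplectic submanifold with induced form $\Omega|_{\Sigma_0}$.

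The main obstacle I anticipate is upgrading $t$ to a Poisson map on $\Sigma_0$; this is precisely where the dual pair hypothesis is essential. Given $f\in C^\infty(M_1)$, the form $t^*df$ annihilates $\ker dt$, so $X^\Sigma_{f\circ t}=\Omega^{-1}(t^*df)$ lies in $(\ker dt)^\Omega$. In a symplectic manifold the relation $(\ker ds)^\Omega=\ker dt$ is equivalent by double orthogonality to $(\ker dt)^\Omega=\ker ds$, so $X^\Sigma_{f\circ t}$ lies in $\ker ds$ and is in particular tangent to $\Sigma_0$. Its restriction to $\Sigma_0$ is then $\Omega|_{\Sigma_0}$-Hamiltonian for $f\circ t|_{\Sigma_0}$, directly from $\iota_{X^\Sigma_{f\circ t}}\Omega=t^*df$, so the Hamiltonian vector fields are $t|_{\Sigma_0}$-related, proving that $t|_{\Sigma_0}:(\Sigma_0,(\Omega|_{\Sigma_0})^{-1})\rmap (M_1,\pi_1)$ is Poisson.

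With $t|_{\Sigma_0}$ Poisson, a second application of Lemma \ref{Poisson pulls back Poisson transversals} to the transversal $X_1\subset M_1$ presents $\overline{\Sigma}=s^{-1}(X_0)\cap t^{-1}(X_1)$ as a Poisson transversal inside the symplectic manifold $\Sigma_0$, hence (by Example 1) as a symplectic submanifold with form $\Omega|_{\overline{\Sigma}}$, with $t|_{\overline{\Sigma}}:\overline{\Sigma}\rmap X_1$ Poisson. Swapping the roles of $s$ and $t$ and running the same two-step argument yields that $s|_{\overline{\Sigma}}:\overline{\Sigma}\rmap X_0$ is Poisson as well. That both restrictions are submersions is a routine transversality check at each stage, in which the Poisson transversal splitting $TM=TX\oplus\pi^\sharp(\Co X)$ plays the role of the complement needed to spread surjectivity from the original submersion to its restriction.

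It remains to check the symplectic orthogonality $\ker d(s|_{\overline{\Sigma}})^{\Omega|_{\overline{\Sigma}}}=\ker d(t|_{\overline{\Sigma}})$. The dual pair identity $(\ker ds)^\Omega=\ker dt$ forces $\dim\Sigma=\dim M_0+\dim M_1$, and tracking the transversalities through the two restriction steps gives $\dim\overline{\Sigma}=\dim X_0+\dim X_1$, so both sides of the claimed equality have dimension $\dim X_0$. For inclusion, any $v\in\ker d(s|_{\overline{\Sigma}})\subset\ker ds$ and $w\in\ker d(t|_{\overline{\Sigma}})\subset\ker dt$ pair trivially under $\Omega$ by the original orthogonality; since $\Omega|_{\overline{\Sigma}}$ agrees with $\Omega$ on $T\overline{\Sigma}$, one side lies in the $\Omega|_{\overline{\Sigma}}$-orthogonal of the other, and equality follows by the dimension count.
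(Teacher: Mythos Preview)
Your proof is correct and takes a genuinely different route from the paper's. You slice sequentially---first by $s$, then by $t$---and must therefore verify the intermediate fact that $t|_{\Sigma_0}$ remains Poisson, which you do cleanly via the tangency of Hamiltonian vector fields $X^{\Sigma}_{f\circ t}\in\ker ds$. The paper instead observes that $\overline{\Sigma}$ is the preimage of the Poisson transversal $X_0\times X_1\subset(M_0\times M_1,\pi_0\times\pi_1)$ under the single Poisson map $(s,t):(\Sigma,\Omega)\to M_0\times M_1$, so one application of Lemma~\ref{Poisson pulls back Poisson transversals} yields at once that $\overline{\Sigma}$ is symplectic and that $(\overline{s},\overline{t})$ is Poisson; this bypasses your intermediate step entirely. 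For the submersion and orthogonality claims the paper also packages things more tightly: from the obvious inclusion $\ker d\overline{t}\subset(\ker d\overline{s})^{\Omega|_{\overline{\Sigma}}}$ it deduces $\dim\ker d\overline{s}+\dim\ker d\overline{t}\le x_0+x_1$, and combines this with the trivial lower bounds $\dim\ker d\overline{s}\ge x_1$, $\dim\ker d\overline{t}\ge x_0$ to force equality everywhere---so submersion and orthogonality fall out of a single dimension count, with no separate ``routine transversality check'' needed. Your sequential approach has the merit of exposing exactly where the dual-pair hypothesis enters, while the paper's product trick is more economical.
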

\begin{proof}
First note that $\overline{\Sigma}$ is the inverse image of the Poisson transversal $X_0\times X_1$ under the Poisson map
\[(s,t):(\Sigma,\Omega)\rmap (M_0,\pi_{0})\times(M_1,\pi_1).\]
By Lemma \ref{lem : Poisson pulls back Poisson transversals}, $(s,t)$ is
transverse to $X_0\times X_1$, $\overline{\Sigma}$ is a symplectic manifold and $(s,t)$ restricts to a Poisson map
\[(s,t):(\overline{\Sigma},\Omega|_{\overline{\Sigma}})\rmap (X_0,\pi_{X_0})\times(X_1,\pi_{X_1}).\]
It remains to show that the maps
\[\overline{s}:=s|_{\overline{\Sigma}}:\overline{\Sigma}\rmap X_0 \quad\textrm{and}\quad \overline{t}:=t|_{\overline{\Sigma}}:\overline{\Sigma}\rmap X_1\]
are submersions with symplectically orthogonal fibres. Let $m_i:=\dim(M_i)$ and $x_i:=\dim(X_i)$. The fact that the
$s$ and $t$ submersions with orthogonal fibres, implies that $\dim(\Sigma)=m_0+m_1$. By transversality of $(s,t)$
and $X_0\times X_1$, we have that $\codim(\overline{\Sigma})=\codim(X_0\times X_1)$; thus
$\dim(\overline{\Sigma})=x_0+x_1$. Now, for a point $p\in \overline{\Sigma}$, one clearly has $\ker d_p\overline{t}\subset
(\ker d_p\overline{s})^{\Omega|_{\overline{\Sigma}}}$; since $\overline{\Sigma}$ is symplectic, it follows that
\[\dim(\ker d_p\overline{s})+\dim(\ker d_p\overline{t})\leq \dim(\overline{\Sigma})=x_0+x_1.\]
On the other hand, we have that $\dim(\ker d_p\overline{s})\geq \dim(\overline{\Sigma})-\dim(X_0)=x_1$, and similarly
$\dim(\ker d_p\overline{t})\geq x_0$. Therefore, we obtain $\dim(\ker d_p\overline{s})=x_1$ and $\dim(\ker
d_p\overline{t})= x_0$. This implies that $d_p\overline{s}$ and $d_p\overline{t}$ are surjective, and that $\ker
d_p\overline{s}$ and $\ker d_p\overline{t}$ are symplectically orthogonal.
\end{proof}

The next Lemma shows how $\pi_0,\pi_1$ and $\Omega$ are related:

\begin{lemma}\label{lemma_Dirac}
 Let
$(M_0,\pi_0)\stackrel{s}{\lmap}(\Sigma,\Omega)\stackrel{t}{\rmap}(M_1,\pi_1)$ be a dual pair. Then the Dirac
structures $L_{\pi_i}$ corresponding to $\pi_i$ satisfy the following relation:
\[  s^{*}(L_{\pi_0})^{-\Omega}=t^{*}(L_{-\pi_1}). \]
\end{lemma}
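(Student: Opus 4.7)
My plan is to verify the identity pointwise by first noting that both sides are Dirac structures on the symplectic manifold $\Sigma$ (gauge transform and clean pullback of Dirac structures being Dirac structures of the same rank), so both have rank $\dim\Sigma$; hence it will suffice to prove a single inclusion. Unpacking the definitions, the left-hand side consists of pairs
\[(V,\, s^{*}\xi - \iota_{V}\Omega),\quad V\in T\Sigma,\ \xi\in T^{*}M_{0},\ \ s_{*}V=\pi_{0}^{\sharp}(\xi),\]
while the right-hand side consists of pairs $(V,\, t^{*}\eta)$ with $\eta\in T^{*}M_{1}$ satisfying $t_{*}V=-\pi_{1}^{\sharp}(\eta)$. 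I would fix an arbitrary element of the left-hand side and produce a matching $\eta$.

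The key move is to decompose $V=V_{0}+W$, where $V_{0}:=(\Omega^{\sharp})^{-1}(s^{*}\xi)$ is the Hamiltonian lift of $\xi$ via $\Omega$. Because $s$ is a Poisson submersion, $s_{*}V_{0}=\pi_{0}^{\sharp}(\xi)=s_{*}V$, and hence $W\in\ker ds$. With this substitution the covector part simplifies: $s^{*}\xi-\iota_{V}\Omega=-\iota_{W}\Omega$. The orthogonality hypothesis $\ker ds=(\ker dt)^{\Omega}$ forces $\iota_{W}\Omega$ to annihilate $\ker dt$ and thus to be a $t$-pullback, so there is a unique $\eta\in T^{*}M_{1}$ with $-\iota_{W}\Omega=t^{*}\eta$. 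This already places the original element in the form required by the right-hand side.

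It remains to check that this $\eta$ satisfies $t_{*}V=-\pi_{1}^{\sharp}(\eta)$. Here I would use that $V_{0}$ itself lies in $\ker dt$: for any $U\in\ker ds$ one has $\Omega(V_{0},U)=s^{*}\xi(U)=\xi(s_{*}U)=0$, so $V_{0}\in(\ker ds)^{\Omega}=\ker dt$. Consequently $t_{*}V=t_{*}W$, and since $\iota_{W}\Omega=-t^{*}\eta$ and $t$ is Poisson, applying $t_{*}$ to $W=(\Omega^{\sharp})^{-1}(-t^{*}\eta)$ yields $t_{*}W=-\pi_{1}^{\sharp}(\eta)$, as required. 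I expect the only delicate point to be keeping track of the various signs and the conventions for gauge transformation and Dirac pullback; once those are fixed, the argument is the short calculation above together with the clean-dimension count.
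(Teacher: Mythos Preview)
Your argument is correct and is essentially the same as the paper's: both decompose the tangent component via the Hamiltonian lift $V_0=(\Omega^{-1})^{\sharp}(s^{*}\xi)$, use $V-V_0\in\ker ds$ together with symplectic orthogonality of the fibres to produce $\eta$, and then verify the required relation under $t_{*}$. The only cosmetic difference is that you close the argument with a rank count (both sides being Lagrangian subbundles of $T\Sigma\oplus T^{*}\Sigma$), whereas the paper appeals to the evident symmetry of the dual pair to get the reverse inclusion.
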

\begin{proof}
An element $\chi\in s^{*}(L_{\pi_0})^{-\Omega}$ is of the form
\[\chi=Y+s^{*}\xi-\iota_{Y}\Omega, \text{ where}\quad \xi \in T^{*}M_0, \quad s_{*}Y=\pi_0^{\sharp}(\xi).\]
Since also $s_{*}\Omega^{-1}(s^{*}\xi)=\pi_0^{\sharp}(\xi)$, we have that
\[ Y-(\Omega^{-1})^{\sharp}(s^{*}\xi) \in \ker ds = (\Omega^{-1})^{\sharp}(t^{*}T^{*}M_1).\]
Hence there is $\eta \in T^{*}M_1$ such that
\[ Y=(\Omega^{-1})^{\sharp}(s^{*}\xi)-(\Omega^{-1})^{\sharp}(t^{*}\eta).\]
Applying $t_{*}$, respectively $\Omega$, to both sides we find that
\[ t_{*}Y=-t_{*}(\Omega^{-1})^{\sharp}(t^{*}\eta)=-\pi_1^{\sharp}(\eta), \quad \textrm{respectively}\quad s^*\xi-\iota_{Y}\Omega=t^*\eta.\]
Hence
\[\chi=Y+s^{*}\xi-\iota_{Y}\Omega =Y+t^{*}\eta\in t^{*}(L_{-\pi_1}).\]
This shows one inclusion; the other follows by symmetry.
\end{proof}

As a first step in the proof of Theorem \ref{theorem 1}, we analyze what happens infinitesimally.

\begin{lemma}\label{lemma_well_defined}
We have that $\widetilde{\sigma}_{\mathcal{X}}\in \Upsilon(\Co X,\sigma)$ and that $\exp_{\mathcal{X}}$ is a
diffeomorphism between opens around $X$.
\end{lemma}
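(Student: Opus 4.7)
The plan is to base everything on a single computation: the linearization of the geodesic flow along the zero section of $T^*M$. I would first observe that properties (1) and (2) in the definition of a Poisson spray force $\mathcal{X}$ to vanish identically on the zero section --- property (1) makes $\mathcal{X}(0_x)$ vertical, while homogeneity gives $\mathcal{X}(0_x) = t^2\mathcal{X}(0_x)$ for all $t > 0$. Writing $\mathcal{X}$ in canonical coordinates $(q^i, p_i)$ and exploiting homogeneity once more, I would compute that, under the canonical splitting $T_{0_x}T^*M \cong T_xM \oplus T^*_xM$, the derivative $d\mathcal{X}|_{0_x}$ is the nilpotent map $(v,\xi) \mapsto (\pi^\sharp_x\xi,\,0)$. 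Since this squares to zero, the linearized flow is simply $d\phi^t_{\mathcal{X}}|_{0_x}(v,\xi) = (v + t\pi^\sharp_x\xi,\,\xi)$.

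From this formula both assertions drop out. For the diffeomorphism claim, I would differentiate $\exp_{\mathcal{X}} = p \circ \phi^1_{\mathcal{X}} \circ i$ (with $i \colon \Co X \hookrightarrow T^*M$ the inclusion) at a point $x \in X$, getting the map $T_xX \oplus \Co_xX \to T_xM$, $(v,\xi) \mapsto v + \pi^\sharp_x(\xi)$. By the transversality decomposition (\ref{EQ3}) together with the injectivity (\ref{EQ1}), this is an isomorphism. Since $\exp_{\mathcal{X}}$ restricts to the identity on $X$ and has bijective differential at every point of $X$, a standard inverse function theorem argument upgrades this to a diffeomorphism between open neighborhoods of $X$ in $\Co X$ and in $M$.

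For the extension claim, $\widetilde{\sigma}_{\mathcal{X}}$ is closed because $\Omega_{\mathcal{X}}$ is, so only the restriction condition needs to be checked. Using $\omega_{\text{can}}|_{0_x}\bigl((v_1,\xi_1),(v_2,\xi_2)\bigr) = \xi_2(v_1) - \xi_1(v_2)$ together with the linearized flow above, the integrand in (\ref{EQ_average}) at $0_x$ becomes $\xi_2(v_1) - \xi_1(v_2) + 2t\,\pi_x(\xi_1,\xi_2)$; integrating in $t$ yields $\Omega_{\mathcal{X}}|_{0_x}\bigl((v_1,\xi_1),(v_2,\xi_2)\bigr) = \xi_2(v_1) - \xi_1(v_2) + \pi_x(\xi_1,\xi_2)$. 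Restricting the entries to $T_xX \oplus \Co_xX$ annihilates the first two terms (conormal directions annihilate tangent directions), leaving $\pi_x(\xi_1,\xi_2) = w_X(\xi_1,\xi_2)$; hence $-\Omega_{\mathcal{X}}|_{\Co X}$ restricts to $-w_X = \sigma$ on $T(\Co X)|_X$, which is precisely the extension condition.

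The only delicate step is the linearization of $\mathcal{X}$ at the zero section; once that is in hand, both halves of the lemma turn out to be encoded in the same short formula, and the remaining work is just bookkeeping with sign conventions for $\pi^\sharp$ and $\omega_{\text{can}}$. It is striking that a single contravariant-exponential produces at once the adapted tubular neighborhood, the closed extension, and the desired first-order matching --- this is exactly the phenomenon flagged in the discussion preceding the theorem.
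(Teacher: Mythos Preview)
Your proof is correct and follows essentially the same approach as the paper's: both rest on the linearized geodesic flow formula $d\phi^t_{\mathcal{X}}|_{0_x}(v,\xi)=(v+t\pi^\sharp_x\xi,\xi)$, from which the formula for $\Omega_{\mathcal{X}}$ along the zero section and the bijectivity of $d\exp_{\mathcal{X}}$ on $T_xX\oplus \Co_xX$ are read off exactly as you do. The only difference is that you derive the linearization from the spray axioms directly, whereas the paper cites it from \cite{CrMar11}; your explicit invocation of (\ref{EQ1}) and (\ref{EQ3}) for the isomorphism is a welcome clarification of what the paper leaves implicit.
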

\begin{proof}
We identify the zero section of $T^*M$ with $M$, and for $x\in M$, we identify $T_{x}(T^*M)=T_xM\oplus T_x^*M$.
The properties of the Poisson spray imply that the geodesic flow fixes $M$, and that its differential along $M$ is given
by (see \cite{CrMar11})
\[d_{x}\phi_{\mathcal{X}}^t:T_xM\oplus T_x^*M\rmap T_xM\oplus T_x^*M, \ \ (Y,\xi)\mapsto (Y+t\pi^{\sharp}(\xi),\xi).\]

In particular, $\exp_{\mathcal{X}}=p\circ\phi_{\mathcal{X}}^1$ is a diffeomorphism around $X$, restricting to the identity along $X$, and the following formula for $\Omega_{\mathcal{X}}$ holds along $M$:
\[\Omega_{\mathcal{X}}\left((Y_1,\xi_1),(Y_2,\xi_2)\right)=\xi_2(Y_1)-\xi_1(Y_2)+\pi(\xi_1,\xi_2).\]
Taking $(Y_i,\xi_i) \in T_xX\oplus \Co_xX=T_x(\Co X)$, for $x\in X$, we obtain
\begin{align*}
\Omega_{\mathcal{X}}&\left((Y_1,\xi_1),(Y_2,\xi_2)\right)=\pi(\xi_1,\xi_2)=w_X(\xi_1,\xi_2),
\end{align*}
showing that $\widetilde{\sigma}_{\mathcal{X}}\in \Upsilon(\Co X,-w_X)$.
\end{proof}

Next, we observe that Theorem 3 implies the existence of self-dual pairs:

\begin{lemma}\label{Lemma_self_dual_pair}
Let $\mathcal{X}$ be a Poisson spray on the Poisson manifold $(M,\pi)$, and denote by $\Omega_{\mathcal{X}}$ the
symplectic form from Theorem \ref{existence of symplectic realizations}. On an open neighborhood of the zero section
$\Sigma\subset T^*M$ we have a full dual pair:
\[(M,\pi)\stackrel{p}{\lmap}(\Sigma,\Omega_{\mathcal{X}})\stackrel{\exp_{\mathcal{X}}}{\rmap}(M,-\pi).\]
\end{lemma}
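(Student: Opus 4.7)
The plan is to verify the three conditions defining a full dual pair in turn: (1) $p$ and $\exp_{\mathcal{X}}$ are surjective submersions; (2) $p$ is Poisson to $(M,\pi)$ and $\exp_{\mathcal{X}}$ is Poisson to $(M,-\pi)$; (3) the kernels $\ker dp$ and $\ker d\exp_{\mathcal{X}}$ are symplectically orthogonal, after shrinking $\Sigma$ around the zero section.

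Condition (1) is immediate: the spray property $p_\ast\mathcal{X}=\pi^\sharp$ already gives that $p$ is a submersion (surjective onto $M$ since it contains the zero section). By Lemma \ref{lemma_well_defined}, $\exp_{\mathcal{X}}$ restricts to the identity on $M\subset T^\ast M$ and its differential there sends $(Y,\xi)\mapsto Y+\pi^\sharp(\xi)$, hence is surjective onto $TM$. So $\exp_{\mathcal{X}}$ is a local diffeomorphism near $M$, and after shrinking $\Sigma$ we may arrange it to be a surjective submersion onto a neighborhood of $M$, which we relabel $M$.

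For condition (3), using the formula $\Omega_{\mathcal{X}}|_M((Y_1,\xi_1),(Y_2,\xi_2))=\xi_2(Y_1)-\xi_1(Y_2)+\pi(\xi_1,\xi_2)$ from the proof of Lemma \ref{lemma_well_defined}, I would compute along $M$ that $\ker d_xp=\{(0,\xi):\xi\in T^\ast_xM\}$ and $\ker d_x\exp_{\mathcal{X}}=\{(-\pi^\sharp(\xi),\xi):\xi\in T^\ast_xM\}$, and a direct pairing check gives $\Omega_{\mathcal{X}}|_M(\ker d_xp,\ker d_x\exp_{\mathcal{X}})=0$. Since both distributions have rank $\dim M=\tfrac12\dim\Sigma$, one has the equality $(\ker dp)^{\Omega_{\mathcal{X}}}=\ker d\exp_{\mathcal{X}}$ pointwise along $M$; by openness of the transversal decomposition this extends to a neighborhood of $M$, which we absorb into $\Sigma$.

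The real content is condition (2), the anti-Poisson property of $\exp_{\mathcal{X}}$ off the zero section. This is where I expect the main obstacle, because infinitesimal information along $M$ is not enough to conclude it; one needs the global fact that the averaging (\ref{EQ_average}) is tailored precisely so that both endpoints of the time-$1$ geodesic flow become Poisson maps with opposite signs, just as for the source and target of a symplectic groupoid. The cleanest route is to invoke Theorem \ref{existence of symplectic realizations} together with the refinement of its proof in \cite{CrMar11}, where this anti-Poisson property of $\exp_{\mathcal{X}}$ is explicitly established. Alternatively, one could derive it a posteriori from condition (3): once $(\ker dp)^{\Omega_{\mathcal{X}}}=\ker d\exp_{\mathcal{X}}$ is known on a neighborhood of $M$, general principles for symplectic realizations guarantee that $\exp_{\mathcal{X}}$ descends to a Poisson map to some Poisson structure on $M$, which must agree with $-\pi$ by the infinitesimal computation above.
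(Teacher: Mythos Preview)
Your argument for condition (3) has a genuine gap. You verify the orthogonality $\Omega_{\mathcal{X}}(\ker dp,\ker d\exp_{\mathcal{X}})=0$ only along the zero section $M$ and then claim it extends to a neighborhood ``by openness of the transversal decomposition''. But symplectic orthogonality of two half-rank distributions is a \emph{closed} condition, not an open one: what is open is transversality, not equality. Knowing $(\ker dp)^{\Omega_{\mathcal{X}}}=\ker d\exp_{\mathcal{X}}$ along $M$ tells you nothing about nearby points of $\Sigma$. The paper does not attempt this; it simply cites \cite{CrMar11}, where the orthogonality of the fibres of $p$ and $\exp_{\mathcal{X}}$ is proved on the whole of $\Sigma$ using the explicit form of $\Omega_{\mathcal{X}}$ as an average. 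Since your alternative route for condition (2) relies on having (3) on a full neighborhood, that route collapses as well.

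For the anti-Poisson property of $\exp_{\mathcal{X}}$, the paper does not defer to \cite{CrMar11} (only the orthogonality is taken from there), nor does it run the Libermann argument you sketch. Instead it gives a short direct proof worth knowing: observe that $-\mathcal{X}$ is a Poisson spray for $-\pi$, set $\Sigma_{-}:=\phi_{\mathcal{X}}^{1}(\Sigma)$, and compute
\[
(\phi_{\mathcal{X}}^{1})^{*}\Omega_{-\mathcal{X}}
=\int_{0}^{1}(\phi_{\mathcal{X}}^{1})^{*}(\phi_{-\mathcal{X}}^{t})^{*}\omega_{\mathrm{can}}\,dt
=\int_{0}^{1}(\phi_{\mathcal{X}}^{1-t})^{*}\omega_{\mathrm{can}}\,dt
=\Omega_{\mathcal{X}}.
\]
Hence $\phi_{\mathcal{X}}^{1}:(\Sigma,\Omega_{\mathcal{X}})\to(\Sigma_{-},\Omega_{-\mathcal{X}})$ is a symplectomorphism, and by Theorem 3 applied to $(-\pi,-\mathcal{X})$ the projection $p:(\Sigma_{-},\Omega_{-\mathcal{X}})\to(M,-\pi)$ is Poisson. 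Since $\exp_{\mathcal{X}}=p\circ\phi_{\mathcal{X}}^{1}$, it is a composition of Poisson maps, hence Poisson to $(M,-\pi)$. This is the missing idea your condition (2) needed.
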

\begin{proof}
Let $\Sigma$ be an open neighborhood of the zero section on which the geodesic flow $\phi_{\mathcal{X}}^t$ is
defined for all $t\in [0,1]$, and on which $\Omega_{\mathcal{X}}$ is nondegenerate. In the proof of the main result of \cite{CrMar11} (Theorem 3 above) it is shown that the symplectic orthogonals of the fibres $p$ are the fibres of $\exp_{\mathcal{X}}$. To show that $\exp_{\mathcal{X}}$ pushes $\Omega_{\mathcal{X}}^{-1}$ down to a bivector on $M$, one could
invoke Libermann's theorem, and then, using the formulas from the proof of Lemma \ref{lemma_well_defined}, one could check that along the zero section this bivector is indeed $-\pi$. We adopt a more direct approach. First note that $-\mathcal{X}$ is a Poisson spray for $-\pi$, and that on $\Sigma_{-}:=\phi_{\mathcal{X}}^1(\Sigma)$, the geodesic flow of $-\mathcal{X}$ is defined up to time 1. Moreover, $\Omega_{-\mathcal{X}}$ is nondegenerate on
$\Sigma_{-}$, because
\[(\phi_{\mathcal{X}}^1)^*\Omega_{-\mathcal{X}}=
\int_{0}^1(\phi_{\mathcal{X}}^1)^*(\phi_{-\mathcal{X}}^{t})^*\omega_{\textrm{can}}dt=
\int_{0}^1(\phi_{\mathcal{X}}^{1-t})^*\omega_{\textrm{can}}dt=
\int_{0}^1(\phi_{\mathcal{X}}^{t})^*\omega_{\textrm{can}}dt
=\Omega_{\mathcal{X}}.\]
This also finishes the proof, since $\exp_{\mathcal{X}}$ is the composition of Poisson maps:
\[(\Sigma,\Omega_{\mathcal{X}})\stackrel{\phi_{\mathcal{X}}^1}{\rmap}(\Sigma_{-},\Omega_{\mathcal{-X}})\stackrel{p}{\rmap}(M,-\pi).\qedhere \]
\end{proof}

We are ready to conclude the proof.

\begin{proof}[Proof of Theorem 1]
We use the self-dual pair from Lemma \ref{Lemma_self_dual_pair}, which, by abuse of notation, we write as if it were defined on the entire $T^*M$:
\[(M,\pi)\stackrel{p}{\lmap}(T^*M,\Omega_{\mathcal{X}})\stackrel{\exp_{\mathcal{X}}}{\rmap}(M,-\pi).\]
Using Lemma \ref{Lemma_restricting_dual_pairs}, we restrict to $X\times M$ to obtain a new dual pair (again, the
maps are defined only around $X$)
\[(X,\pi_{X})\stackrel{p}{\lmap}(T^*M|_{X},\Omega_{\mathcal{X}}|_{T^*M|_{X}})\stackrel{\exp_{\mathcal{X}}}{\rmap}(M,-\pi),\]
By Lemma \ref{lemma_Dirac}, we have the following equality of Dirac structures:
\[p^*(L_{\pi_{X}})^{-\Omega_{\mathcal{X}}|_{T^*M|_{X}}}=\exp_{\mathcal{X}}^*(L_{\pi}).\]
The left hand side restricts along $\Co X$ to the Dirac structure of the local model
$\pi(\widetilde{\sigma}_{\mathcal{X}})$, thus:
\[L_{\pi(\widetilde{\sigma}_{\mathcal{X}})}=\exp_{\mathcal{X}}^*(L_{\pi}).\]
Since $\exp_{\mathcal{X}}$ is a diffeomorphism around $X$ (Lemma \ref{lemma_well_defined}), we see that it is a
Poisson diffeomorphism around $X$:
\[\exp_{\mathcal{X}}:(\Co X,\pi(\widetilde{\sigma}_{\mathcal{X}}))\diffto (M,\pi).\qedhere\]
\end{proof}

\section{Application: Equivariant Weinstein splitting theorem}\label{Applications of the normal form theorem}

As an application of the normal form theorem (or rather of its proof), we obtain an equivariant version of Weinstein's
splitting theorem around fixed points. A version of this result with extra assumptions was obtained in
\cite{MirZung06}.

\begin{theorem2}
Let $(M,\pi)$ be a Poisson manifold and $G$ a compact Lie group acting by Poisson diffeomorphisms on $M$. If
$x\in M$ is a fixed point of $G$, then there are coordinates
$(p_1,\ldots,p_n,q_1,\ldots,q_n,y_1,\ldots,y_m)\in\mathbb{R}^{2n+m}$ centered at $x$ such that
\[\pi=\sum_{i=1}^n\frac{\partial}{\partial q_i}\wedge\frac{\partial}{\partial p_i}+\frac{1}{2}\sum_{j,k=1}^m \varpi_{j,k}(y)\frac{\partial}{\partial y_j}\wedge\frac{\partial}{\partial y_k},\ \ \varpi_{j,k}(0)=0,\]
and in these coordinates $G$ acts linearly and keeps the subspaces $\mathbb{R}^{2n}\times \{0\}$ and $\{0\}\times
\mathbb{R}^m$ invariant.

In other words, $(M,\pi)$ is $G$-equivariantly Poisson diffeomorphic around $x$ to an open around $(0,x)$ in the
product
\begin{equation}\label{EQProd}
(T_xL,\omega_{x}^{-1})\times (X,\pi_{X}),
\end{equation}
where $(L,\omega)$ is the symplectic leaf through $x$, $X$ is a $G$-invariant Poisson transversal of complementary
dimension, and $G$ acts diagonally on (\ref{EQProd}).
\end{theorem2}

\subsubsection*{On equivariant symplectic trivializations}

In the proof of Theorem 2 we will use a lemma on equivariant trivializations of symplectic vector bundles, which we
present here. We start with a result about symplectic vector spaces:

\begin{lemma}
Let $(V,\omega_0)$ be a symplectic vector space. There exist an open neighborhood $\mathcal{U}(\omega_0)$ of
$\omega_0$ in $\bigwedge^2 V^*$, invariant under the group $\mathrm{Sp}(V,\omega_0)$ of linear symplectomorphisms of
$\omega_0$, and a smooth map
\[b:\mathcal{U}(\omega_0)\rmap \mathrm{Gl}(V), \ \ \omega\mapsto b_{\omega}\]
satisfying :
\[b_{\omega}^*(\omega_0)=\omega, \ \ b_{\omega_0}=Id,\ \ s^{-1}\circ b_{\omega}\circ s=b_{s^*(\omega)},\]
for all $\omega\in \mathcal{U}(\omega_0)$ and all $s\in \mathrm{Sp}(V,\omega_0)$.
\end{lemma}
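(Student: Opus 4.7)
The plan is to construct $b_\omega$ as a symplectic square root via holomorphic functional calculus. For $\omega$ close to $\omega_0$, I consider the composition $A_\omega := (\omega_0^\flat)^{-1}\circ\omega^\flat \in \mathrm{End}(V)$, which is a linear automorphism characterized by $\omega(v,w)=\omega_0(A_\omega v,w)$. The antisymmetry of both forms immediately yields that $A_\omega$ is $\omega_0$-selfadjoint, i.e.\ $\omega_0(A_\omega v,w)=\omega_0(v,A_\omega w)$, and $A_{\omega_0}=\mathrm{Id}$.

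I then define $\mathcal{U}(\omega_0)$ to be the open set of those $\omega \in \bigwedge^2 V^*$ whose associated $A_\omega$ has spectrum disjoint from $(-\infty,0]$. This neighborhood is $\mathrm{Sp}(V,\omega_0)$-invariant: since $s^*\omega_0=\omega_0$, a one-line computation gives $A_{s^*\omega}=s^{-1}\circ A_\omega\circ s$, which is conjugate to $A_\omega$ and hence has the same spectrum. On $\mathcal{U}(\omega_0)$ the principal branch of $z\mapsto z^{1/2}$ is holomorphic in a neighborhood of the spectrum of $A_\omega$, so the holomorphic functional calculus produces a smooth map $\omega\mapsto A_\omega^{1/2}$, which I define to be $b_\omega$. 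Invertibility of $b_\omega$ is automatic since $0$ is not in the spectrum.

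Verifying the three required identities is then essentially formal. At $\omega_0$, $A_{\omega_0}=\mathrm{Id}$ gives $b_{\omega_0}=\mathrm{Id}$. Because $b_\omega$ is obtained as a contour/polynomial limit in $A_\omega$, the $\omega_0$-selfadjointness of $A_\omega$ passes to $b_\omega$; hence
\[\omega_0(b_\omega v,b_\omega w)=\omega_0(v,b_\omega^2 w)=\omega_0(v,A_\omega w)=\omega(v,w),\]
which is precisely $b_\omega^*\omega_0=\omega$. For equivariance, observe that $s^{-1}b_\omega s$ is a square root of $A_{s^*\omega}=s^{-1}A_\omega s$, and its spectrum lies in the open right half-plane (being conjugate to that of the principal square root $b_\omega$); by the uniqueness of the square root with spectrum in the right half-plane, $b_{s^*\omega}=s^{-1}\circ b_\omega\circ s$.

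I do not anticipate a serious obstacle here. The only point that deserves care is making the domain of the square root genuinely conjugation-invariant, which is why the spectral characterization of $\mathcal{U}(\omega_0)$ is preferable to the naive condition $\|A_\omega-\mathrm{Id}\|<1$: the latter depends on an auxiliary norm which need not be $\mathrm{Sp}(V,\omega_0)$-invariant, while the former is a spectral condition and so is automatically invariant under conjugation.
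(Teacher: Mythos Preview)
Your proof is correct and takes essentially the same approach as the paper: both define $b_\omega$ as the holomorphic-functional-calculus square root of $\omega_0^{-1}\circ\omega$, with $\mathcal{U}(\omega_0)$ characterized spectrally so that it is manifestly conjugation-invariant. The only cosmetic difference is that you phrase the verification of $b_\omega^*\omega_0=\omega$ via $\omega_0$-selfadjointness of $A_\omega$ (and hence of $b_\omega$), whereas the paper computes the transpose $b_\omega^*=\omega_0\circ b_\omega\circ\omega_0^{-1}$ directly; these are equivalent formulations of the same fact.
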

\begin{proof}
On the open $\mathbb{O}:=\mathbb{C}\backslash[0,\8)$ consider the holomorphic square-root
\[\sqrt{(\cdot)}:\mathbb{O}\rmap \mathbb{C},\ \ \sqrt{e^{a+i\theta}}:=e^{a/2+i\theta/2}, \ a\in\mathbb{R}, \ \theta\in (-\pi,\pi).\]
Denote the set of linear isomorphisms of $V$ with eigenvalues in $\mathbb{O}$ by $\mathbb{O}(V)\subset \mathrm{Gl}(V)$. By
holomorphic functional calculus \cite{wiki}, there is an ``extension'' of the square-root to $\mathbb{O}(V)$, which
satisfies:
\[\left(\sqrt{x}\right)^2=x,\ \sqrt{x^{-1}}=\left(\sqrt{x}\right)^{-1},\ \sqrt{(y\circ x\circ y^{-1})}=y\circ\sqrt{x}\circ y^{-1},\ \sqrt{x^*}=\left(\sqrt{x}\right)^*,\]
for every $x\in \mathbb{O}(V)$ and every linear isomorphism $y:V\to W$.

Consider $\mathcal{U}(\omega_0):=\{\omega_0\circ x | x\in\mathbb{O}(V)\}$, and define the map
\[b:\mathcal{U}(\omega_0)\rmap \mathrm{Gl}(V), \ b_{\omega}:=\sqrt{\omega_0^{-1}\circ \omega}.\]
Note that via the identification $\bigwedge^2V^*\subset \mathrm{Hom}(V,V^*)$, the action of $\mathrm{Gl}(V)$ on $\bigwedge^2V^*$ becomes $y^*(\omega)=y^*\circ\omega\circ y$. Let $\omega=\omega_0\circ x\in \mathcal{U}(\omega_0)$, with $x\in
\mathbb{O}(V)$ and $s\in \mathrm{Sp}(V,\omega_0)$. The following shows that $\mathcal{U}(\omega_0)$ is
$\mathrm{Sp}(V,\omega_0)$-invariant:
\[s^*(\omega)=s^*\circ \omega_0\circ x\circ s=(s^*\circ \omega_0\circ s)\circ (s^{-1}\circ x\circ s)=\omega_0\circ s^{-1}\circ x\circ s\in \mathcal{U}(\omega_0).\]
For the next condition, note first that
\[b_{\omega}^*=(\sqrt{\omega_0^{-1}\circ \omega})^*=\sqrt{\omega\circ \omega_0^{-1}}=\omega_0\circ
b_{\omega}\circ \omega_0^{-1};\] therefore:
\[b_{\omega}^*(\omega_0)=b_{\omega}^*\circ\omega_0\circ b_{\omega}=\omega_0\circ
b_{\omega}^2=\omega.\] Finally, for $s\in \mathrm{Sp}(V,\omega_0)$, we have that
\begin{align*}
s^{-1}\circ b_{\omega}\circ s&=\sqrt{s^{-1}\circ \omega_0^{-1}\circ \omega\circ s}=\sqrt{s^{-1}\circ \omega_0^{-1}\circ (s^*)^{-1}\circ s^*\circ \omega\circ s}=\\
&=\sqrt{\left(s^*(\omega_0)\right)^{-1}\circ s^*(\omega)}=b_{s^*(\omega)}.\qedhere
\end{align*}
\end{proof}
\begin{remark}\rm
The lemma can also be proved using the Moser argument. First note that $\mathcal{U}(\omega_0)$ can be
described as the set of 2-forms $\omega\in \bigwedge^2 V^*$ for which $\omega_t:=t\omega_0+(1-t)\omega$ is
nondegenerate for all $t\in[0,1]$. The 2-form $\omega-\omega_0$ has a canonical primitive given by $\eta:=\frac{1}{2}
\iota_{\xi}(\omega-\omega_0)$, where $\xi$ is the Euler vector field of $V$. Let $X_t(\omega)$ be the time-dependent
vector field defined by the equation $\iota_{X_t(\omega)}\omega_t=\eta$. The Moser argument shows that the time $t$
flow of $X_t(\omega)$ pulls $t\omega_0+(1-t)\omega$ to $\omega$, and one can easily check that $b_{\omega}$ is the
time-one flow of $X_t(\omega)$.
\end{remark}

\begin{lemma}\label{lemma_simlt_triv}
Let $(E,\sigma)\to X$ be a symplectic vector bundle, and let $G$ be a compact group acting on $E$ by symplectic
vector bundle automorphisms. If $x\in X$ is a fixed point, there exist an invariant open $U\subset X$ around $x$ and a $G$-equivariant symplectic vector bundle isomorphism
\[(E,\sigma|_{U})\diffto (E_x\times U, \sigma_{x}),\]
where the action of $G$ on $E_x\times U$ is the product one.
\end{lemma}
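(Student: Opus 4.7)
My plan is to build a $G$-equivariant trivialization of $E$ as a vector bundle over a $G$-invariant neighborhood of $x$, and then correct the resulting family of symplectic forms to the constant form $\sigma_x$ by applying the map $b$ from the previous lemma fiberwise.

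First, by averaging I would pick a $G$-invariant Riemannian metric on $X$ and a $G$-invariant linear connection $\nabla$ on $E$. Since $x$ is fixed, the metric exponential at $x$ is $G$-equivariant with respect to the isotropy representation of $G$ on $T_xX$, so the image of a small invariant ball in $T_xX$ is a $G$-invariant open neighborhood $U$ of $x$. Radial $\nabla$-parallel transport along the geodesics emanating from $x$ then yields a $G$-equivariant vector bundle isomorphism
\[\Psi:E_x\times U\diffto E|_{U}.\]
Pulling $\sigma|_U$ back through $\Psi$ produces a smooth family $\{\sigma'_u\}_{u\in U}$ of linear symplectic forms on $E_x$ with $\sigma'_x=\sigma_x$, and the $G$-equivariance of $\Psi$ combined with the $G$-invariance of $\sigma$ translates into the relation
\[\rho(g)^{\ast}\sigma'_{g\cdot u}=\sigma'_u, \qquad g\in G,\ u\in U,\]
where $\rho:G\to \mathrm{Sp}(E_x,\sigma_x)$ denotes the linear isotropy representation.

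Since $\mathcal{U}(\sigma_x)$ is open and $\mathrm{Sp}(E_x,\sigma_x)$-invariant, I would then shrink $U$ to the $G$-invariant open subset where $\sigma'_u\in\mathcal{U}(\sigma_x)$, which still contains $x$. On this smaller neighborhood, setting
\[\Phi:E_x\times U\rmap E_x\times U,\qquad \Phi(v,u):=(b_{\sigma'_u}(v),u),\]
the identity $b_{\sigma'_u}^{\ast}(\sigma_x)=\sigma'_u$ from the previous lemma immediately gives $(\Psi\circ\Phi)^{\ast}\sigma=\sigma_x$. Equivariance of $\Phi$ amounts to the identity $b_{\sigma'_{g\cdot u}}=\rho(g)\circ b_{\sigma'_u}\circ \rho(g^{-1})$, which drops out by applying the equivariance property $s^{-1}\circ b_\omega\circ s=b_{s^{\ast}(\omega)}$ of the previous lemma with $s:=\rho(g^{-1})\in \mathrm{Sp}(E_x,\sigma_x)$, combined with the displayed relation above. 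Thus $\Psi\circ\Phi$ is the desired $G$-equivariant symplectic vector bundle trivialization.

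The only genuinely delicate point is securing the equivariant vector bundle trivialization $\Psi$ from averaged geometric data; once this is in place, the built-in $\mathrm{Sp}(V,\omega_0)$-equivariance of $b$ performs the symplectic correction almost formally.
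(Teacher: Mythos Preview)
Your two-step strategy coincides with the paper's: first manufacture a $G$-equivariant vector-bundle trivialization over an invariant neighborhood of $x$, then straighten the resulting family of symplectic forms to the constant $\sigma_x$ using the $\mathrm{Sp}$-equivariance of the map $b$ from the preceding lemma. The only difference is in the first step: you obtain the equivariant trivialization geometrically via an invariant connection and radial parallel transport, whereas the paper starts from an arbitrary trivialization and makes the action diagonal with the averaging operator $A_y=\int_G\rho_x(g)^{-1}\rho_y(g)\,d\mu(g)$; both are standard and equally valid.

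One small bookkeeping slip: since $b_{\sigma'_u}^{\ast}\sigma_x=\sigma'_u$, your $\Phi$ satisfies $\Phi^{\ast}\sigma_x=\{\sigma'_u\}$, so $\Phi$ is a symplectic isomorphism $(E_x\times U,\{\sigma'_u\})\to(E_x\times U,\sigma_x)$, and likewise $\Psi$ maps $(E_x\times U,\{\sigma'_u\})\to(E|_U,\sigma)$. The desired trivialization is therefore $\Phi\circ\Psi^{-1}$ (or equivalently its inverse $\Psi\circ\Phi^{-1}$), not $\Psi\circ\Phi$; as written, $(\Psi\circ\Phi)^{\ast}\sigma$ restricts on the fiber over $u$ to $b_{\sigma'_u}^{\ast}\sigma'_u=(b_{\sigma'_u}^2)^{\ast}\sigma_x$, which is not $\sigma_x$ in general. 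This does not affect the argument otherwise.
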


\begin{proof}
We first construct a $G$-equivariant product decomposition. Let $U$ be a $G$-invariant open over which $E$
trivializes, and fix a trivialization $E|_{U}\cong E_x\times U$. The action of $G$ on $E_x\times U$ is of the form
$g(e,y)=(\rho_y(g)e,gy)$. To make the action diagonal, we apply the vector bundle isomorphism
\[\alpha:E_x\times U\diffto E_x\times U, \ \ (e,y)\mapsto (A_y(e),y),\ \ A_y:=\int_{G}\rho_{x}(g)^{-1}\rho_{y}(g)d\mu(g),\]
where $\mu$ is the Haar measure on $G$. Note that $A_y$ is a linear isomorphism for $y$ near $x$, and that it satisfies
\[A_{gy}\circ \rho_{y}(g)=\rho_x(g)\circ A_y.\]
Thus, by shrinking $U$, we may assume that the action on $E_x\times U$ is the product action, which we simply
denote by $g(e,y)=(ge,gy)$.

The symplectic structures are given by a smooth family $\{\sigma_y\}_{y\in U}$ of bilinear forms on $E_x$. This family
is $G$-invariant, in the sense that it satisfies:
\[\sigma_{gy}=(g^{-1})^*(\sigma_{y}), \ \ g\in G, \ y\in U.\]
Consider the open $\mathcal{U}(\sigma_x)\subset \bigwedge^2 E_x^*$ and the map $b:\mathcal{U}(\sigma_x)\to
\mathrm{Gl}(E_x)$ from the previous lemma. By shrinking $U$, we may assume that $\sigma_y\in \mathcal{U}(\sigma_x)$, for all
$y\in U$. Since $b_{\sigma_y}^*(\sigma_x)=\sigma_y$, we have a ``canonical'' symplectic trivialization:
\[\beta: E_x\times U\diffto  E_x\times U, \ \ (e,y)\mapsto (b_{\sigma_{y}}e,y),\]
Now $g^{-1}:E_x\to E_x$ preserves $\sigma_x$, so:
\[b_{\sigma_{gy}}=b_{(g^{-1})^*\sigma_y}=g\circ b_{\sigma_y}\circ g^{-1}.\]
Equivalently, the map $\beta$ is $G$-equivariant:
\[\beta(ge,gy)=(b_{\sigma_{gy}}ge,gy)=(gb_{\sigma_y}e,gy)=g\beta(e,y).\]
Thus, $\beta\circ\alpha$ is an isomorphism of symplectic vector bundles that trivializes the symplectic
structure, and turns the $G$-action into the product one.
\end{proof}

\subsubsection*{Proof of Theorem 2}
We split the proof into 4 steps.

\underline{Step 1: a $G$-invariant transversal}. Let $(L,\omega)$ denote the leaf through $x$. Since $x$ is a fixed
point, it follows that $G$ preserves $L$. Thus $G$ acts by symplectomorphisms on $(L,\omega)$.

We fix $X\subset M$ a $G$-invariant transversal through $x$ such that $\dim(L)+\dim(X)=\dim(M)$. The existence of
such a transversal follows from Bochner's linearization theorem: the action around $x$ is isomorphic to the linear
action of $G$ on $T_xM$; by choosing a $G$-invariant inner product on $T_xM$, we let $X$ be an invariant ball around
the origin in the orthogonal complement of $T_xL$.

Let $\pi\vert_X=\pi_X+w_X$ denote the decomposition of $\pi$ along $X$. Then $G$ acts by Poisson diffeomorphisms
on $(X,\pi_{X})$, and by symplectic vector bundle automorphisms on $(\Co X,-w_X)$.

\underline{Step 2: the $G$-invariant spray}. Let $\mathcal{X}$ be a $G$-invariant Poisson spray. Such a vector field
can be constructed by averaging any Poisson spray; the conditions that a vector field on $T^*M$ be a Poisson spray
are affine. The flow of $\mathcal{X}$ is therefore $G$-equivariant. By the detailed version of Theorem 1, and with the
notations used there, we obtain a $G$-equivariant Poisson diffeomorphism around $X$
\[\exp_{\mathcal{X}}:(\Co X,\pi(\widetilde{\sigma}_{\mathcal{X}}))\rmap (M,\pi),\]
where $\widetilde{\sigma}_{\mathcal{X}}\in \Upsilon(\Co X,-w_X)$ is automatically $G$-invariant.

\underline{Step 3: a $G$-equivariant symplectic trivialization}. Note first that $w_X$, regarded as a map $\Co X \to
TM\vert_X$, yields a symplectic isomorphism\[w_{X,x}:(\Co_xX,-w_{X,x})\diffto (T_xL,\omega_x).\]This remark and
Lemma \ref{lemma_simlt_triv} imply that around the fixed point $x$, by shrinking $X$ if necessarily, we can
simultaneously trivialize the bundle $(\Co X,-w_X)$ symplectically and turn the action to a product action, hence, we
obtain a $G$-equivariant symplectic vector bundle isomorphism
\[\Psi:\left(\pr_2:(T_xL,\omega_x)\times X\to X\right)\diffto \left(p:(\Co X,-w_X)\to X\right),\]
where the action on $T_xL\times X$ is the product action. Therefore,
$\widetilde{\omega}_{\mathcal{X}}:=\Psi^*(\widetilde{\sigma}_{\mathcal{X}})$ is a closed $G$-invariant extension
of $\omega_x$, i.e.\ $\widetilde{\omega}_{\mathcal{X}}\in \Upsilon(T_xL\times X,\omega_x)$. Moreover, the map
\[\Psi:(T_xL\times X, \pi(\widetilde{\omega}_{\mathcal{X}}))\diffto (\Co X,\pi(\widetilde{\sigma}_{\mathcal{X}}))\]
is a $G$-equivariant Poisson diffeomorphism, where $\pi(\widetilde{\omega}_{\mathcal{X}})$ denotes the Poisson
structure around $X$ corresponding to the Dirac structure $\pr_2^*(L_{\pi_{X}})^{\widetilde{\sigma}_{\mathcal{X}}}$.

\underline{Step 4: the $G$-equivariant Moser argument}. Note that $\omega_x$ has a second extension to $T_xL\times
X$ given by $\overline{\omega}_x:=\pr_1^*(\omega_x)$. The corresponding local model is the Poisson structure from
the statement:
\[(T_xL\times X,\pi(\overline{\omega}_x))=(T_xL,\omega_x^{-1})\times (X,\pi_{X}).\]
By Steps 2 and 3, we are left to find a $G$-equivariant diffeomorphism around $X$ that sends
$\pi(\widetilde{\omega}_{\mathcal{X}})$ to $\pi(\overline{\omega}_x)$. For this we need the equivariant version of
Lemma \ref{lemma_indep}, whose proof can be easily adapted to this setting: First, note that the two-form
$\overline{\omega}_x-\widetilde{\omega}_{\mathcal{X}}$ has a primitive
\[\eta\in \Omega^1(T_xL\times X) \textrm{  such that  }\eta_{(0,y)}=0\textrm{ for all } y\in X.\]
Since both $\overline{\omega}_x$ and $\widetilde{\omega}_{\mathcal{X}}$ are $G$-invariant, by averaging, we can
make $\eta$ $G$-invariant as well. Consider the time-dependent vector field
\[Y_t:=-\pi_t^{\sharp}(\eta),\ \ \ \ \textrm{where }\pi_t:=\pi(\widetilde{\omega}_{\mathcal{X}})^{td\eta}.\]
The time-one flow $\phi_Y^{1,0}$ sends $\pi_0=\pi(\widetilde{\omega}_{\mathcal{X}})$ to
$\pi_1=\pi(\overline{\omega}_x)$. Since both $\pi_t$ and $\eta$ are $G$-invariant, it follows that $\phi_Y^{1,0}$ is
$G$-equivariant as well. This concludes the proof.


\begin{thebibliography}{9}
\bibitem{RuiOli} O.~Brahic, R.~L. ~Fernandes, \emph{Poisson fibrations and fibered symplectic groupoids}, \emph{Contem.~Math.}, AMS \textbf{450} (2008), 41--59.
\bibitem{BR} H.~Bursztyn, O.~Radko, \emph{Gauge equivalence of Dirac structures and symplectic groupoids}, Ann.~Inst.~Fourier (Grenoble)  \textbf{53}  (2003), no. 1, 309--337.
\bibitem{CattZamb09} A.~S.~Cattaneo, M.~Zambon, \emph{Coisotropic embeddings in Poisson manifolds}, Trans. Amer. Math. Soc. \textbf{361} (2009), no. 7, 3721--3746.
\bibitem{CrFer04} M.~Crainic, R.L.~Fernandes, \emph{Integrability of Poisson brackets}, J.~Diff.~Geom.~\textbf{66} (2004), 71--137.
\bibitem{CrMar11} M.~Crainic, I.~M\u{a}rcu\cb{t}, \emph{On the existence of symplectic realizations}, J.~Symplectic~Geom.~ \textbf{9}, (2011), no. 4, 435--444.
\bibitem{Dirac50} P.~Dirac, \emph{Generalized Hamiltonian dynamics}, Canad.~J.~Math. \textbf{2}, (1950), 129--148.
\bibitem{DZ} J.-P.~Dufour, N.T.~Zung, \emph{Poisson structures and their normal forms}, Progress in Mathematics, 242, Birkhauser Verlag, Basel, 2005.
\bibitem{Gromov86} M.~Gromov, \emph{Partial differential relations}, Ergebnisse der Mathematik und ihrer Grenzgebiete Vol. 9, Springer-Verlag, 1986.\textbf{58} (1983), 617-621.
\bibitem{Haef58} A.~H\ae fliger, \emph{Structures feuillet\'ees et cohomologie \`a valeur dans un faisceau de groupo\"{i}des}, Commentarii mathematici Helvetici, \textbf{32} (1957-58), 248--329.
\bibitem{MirZung06} E.~Miranda, N.~T.~Zung, \emph{A note on equivariant normal forms of Poisson structures}, Math.~Res.~Lett.~ \textbf{13}, (2006), no. 5-6, 1001--1012.
\bibitem{Vor} Y.~Vorobjev, \emph{Coupling tensors and Poisson geometry near a single symplectic leaf}, Banach
Center Publ. \textbf{54} (2001), 249--274.
\bibitem{Wein71} A.~Weinstein, \emph{Symplectic manifolds and their lagrangian submanifolds}, Adv.~Math.~ \textbf{6} (1971), 329--346.
\bibitem{Wein77} A.~Weinstein, \emph{Lectures on symplectic manifolds}, Regional Conference Series in Mathematics, No. 29. American Mathematical Society, Providence, R.I., 1977.
\bibitem{Wein83} A.~Weinstein, \emph{The local structure of Poisson manifolds},  J.~Diff.~ Geom.~\textbf{18} (1983), 523--557.
\bibitem{Xu03} P.~Xu, \emph{Dirac submanifolds and Poisson involutions}, Ann.~Sci.~\'{E}cole~Norm.~Sup. (4) \textbf{36} (2003), no. 3, 403--430.
\bibitem{Zamb11} M.~Zambon, \emph{Submanifolds in Poisson geometry: a survey}, Complex~and~differential~geometry, 403--420, Springer Proc. Math., 8, Springer, Heidelberg, 2011.
\bibitem{wiki} Wikipedia article \emph{Holomorphic functional calculus}, version: \url{http://en.wikipedia.org/w/index.php?title=Holomorphic_functional_calculus&oldid=544193271}.
\end{thebibliography}
\end{document}